\documentclass[12pt,leqno]{amsart}
\usepackage{amsmath,epsfig,graphicx,color, float}
\usepackage{subcaption}
\usepackage{relsize}
\usepackage{comment}
\usepackage[colorlinks, linkcolor=blue,citecolor=blue]{hyperref}
\usepackage{pdfpages}
\usepackage{graphicx}
\usepackage{mwe}
\usepackage{longtable} 
\usepackage{algorithm}
\usepackage{algorithmicx}
\usepackage{algpseudocode}
\usepackage{natbib}

\usepackage[margin=1.3in]{geometry}

\numberwithin{table}{section}
\numberwithin{figure}{section}
\numberwithin{equation}{section}

\definecolor{darkblue}{rgb}{.2, 0.2,.8}
\definecolor{darkgreen}{rgb}{0,0.5,0.3}
\definecolor{darkred}{rgb}{.8, .1,.1}

\newcommand{\bfx}{\vect{x}}
\newcommand{\bfX}{\vect{X}}

\newcommand{\bfalp}{\vect{\alpha}}

\newcommand{\bftheta}{\vect{\theta}}

\newcommand{\bfT}{\mat{T}}
\newcommand{\bft}{\vect{t}}
\newcommand{\bfe}{\vect{e}}
\newcommand{\bfpi}{\vect{\pi}^\mathsf{T}}
\newcommand{\bfp}{\vect{\pi}}

\newcommand{\0}{\mat{0}}

\newcommand{\E}{\mathbb{E}}
\renewcommand{\P }{{\mathbb P}}
\newcommand{\ci}{\mathrel{\text{\scalebox{1.07}{$\perp\mkern-10mu\perp$}}}}

\newcommand{\ov}{\overline}

\newcommand{\eqd}{\stackrel{d}{=}}

\newtheorem{lemma}{Lemma}[section]
\newtheorem{theorem}[lemma]{Theorem}
\newtheorem{proposition}[lemma]{Proposition}
\newtheorem{definition}[lemma]{Definition}

\newtheorem{example}[lemma]{Example}

\newtheorem{condition}[lemma]{Condition}
\newtheorem{remark}{Remark}[section]

\usepackage{bm}
\newcommand{\vect}[1]{\pmb{#1}}
\newcommand{\mat}[1]{\boldsymbol{\bm #1}}

\DeclareMathOperator*{\argmax}{arg\,max}

\usepackage{marginnote}

\allowdisplaybreaks

\parindent 0cm
\parskip .2cm

\begin{document}
\bibliographystyle{apalike}
\title[Phase-type mixture-of-experts regression for loss severities]{Phase-type mixture-of-experts regression for loss severities}

\author[M. Bladt]{Martin Bladt}
\address{Faculty of Business and Economics,
University of Lausanne,
Quartier de Chambronne,
1015 Lausanne,
Switzerland}
\email{martin.bladt@unil.ch}

\author[J. Yslas]{Jorge Yslas}
\address{Institute of Mathematical Statistics and Actuarial Science,
University of Bern,
Alpeneggstrasse 22,
CH-3012 Bern,
Switzerland}
\email{jorge.yslas@stat.unibe.ch}

\begin{abstract}
The task of modeling claim severities is addressed when data is not consistent with the classical regression assumptions. This framework is common in several lines of business within insurance and reinsurance, where catastrophic losses or heterogeneous sub-populations result in data difficult to model. Their correct analysis is required for pricing insurance products, and some of the most prevalent recent specifications in this direction are mixture-of-experts models. This paper proposes a regression model that generalizes the latter approach to the phase-type distribution setting. More specifically, the concept of mixing is extended to the case where an entire Markov jump process is unobserved and where states can communicate with each other. The covariates then act on the initial probabilities of such underlying chain, which play the role of expert weights. The basic properties of such a model are computed in terms of matrix functionals, and denseness properties are derived, demonstrating their flexibility. An effective estimation procedure is proposed, based on the EM algorithm and multinomial logistic regression, and subsequently illustrated using simulated and real-world datasets. The increased flexibility of the proposed models does not come at a high computational cost, and the motivation and interpretation are equally transparent to simpler MoE models.

\end{abstract}
\maketitle

\section{Introduction}
The correct estimation of claim severities is a classical problem in actuarial science, and yet the task remains challenging and often only solvable by partially formal procedures. For instance, when dealing with data arising from reinsurance of natural catastrophes or from third-party liability insurance, very large claims are treated differently to the bulk of smaller -- or attritional -- claim sizes. Multimodality of the attritional claims can further exacerbate the problem. Such heterogeneity in the data is often still present after segmentation, possibly due to pooled and unlabelled sub-populations in the dataset. In addition, most common commercial software, which mostly uses generalized linear models (GLM), does not capture quantiles correctly. Consequently, risk managers and actuaries interested in understanding their fitted probabilistic models (and not only using them for prediction) keep returning to the drawing board to obtain more interpretable, flexible and effective statistical tools for their practice.

Several statistically coherent approaches have been proposed in recent years to overcome multimodality and heavy-tailedness. For instance, mixing Erlang distributions results in multimodal histograms, \cite{lee2010modeling} being the first to consider such model for insurance, and then later extended by \cite{tzougas2014optimal, miljkovic2016modeling} for more general mixtures. More recent approaches, such as \cite{fung2019class}, adopt a mixture-of-experts approach, which consists of regressing the component probabilities of a finite mixture model. Regarding the heavy-tailed component, the formal way of dealing with the attritional and large claims jointly has been using splicing -- also referred to as composite models -- which have a different tail and body distribution (see \cite{grun2019extending} for a comparison and a good literature review). Combining the two approaches is the state-of-the-art of probabilistic models for loss severity modeling, referred to as composite models. \cite{reynkens2017modelling} were the first to consider this global approach, and \cite{fung2021mixture} suggested a feature-selection variant.

The main idea of this paper is to use phase-type (PH) distributions to capture the specificities of heterogeneous data more intuitively and effectively than mixing. The underlying multi-state model is easy to motivate and understand for practitioners and is mathematically convenient for developing their estimation. More specifically, we propose to use PH distributions to describe claim severities and build our regression framework with PH building blocks. PH distributions are defined as the absorption time of a time-homogeneous Markov pure-jump process on a finite state space. In life insurance, such a framework is familiar and understood as the traversing of healthy, disabled, and dead states, with time corresponding to calendar time. In non-life insurance, the states can be regarded as unobserved steps in legal cases or reparations of a building, and time now corresponds to the incurred monetary loss.

Many distributions such as the Erlang, generalized Coxian, and finite mixtures between them are all PH distributions (see \cite{neuts75,neuts1981matrix} for the first systematic approaches, and \cite{Bladt2017} for a recent comprehensive treatment), and they are even known to be dense in weak convergence on the set of distributions of positive-valued risks (cf. \cite{asmussen2008applied}). Most applications of PH were initially in the field of applied probability, but their estimation became widely used after \cite{asmussen1996fitting} laid out the EM-algorithm for statistical fitting. To correct for non-exponential tail behavior, \cite{albrecher2019inhomogeneous, albrecher2020iphfit} defined and provided estimation approaches for transformed PH distributions, also known as inhomogeneous phase-type (IPH) laws. 

To incorporate rating factors into our model, we consider regressing the initial probabilities of the underlying stochastic process starting in a given state. This is in the same spirit as the mixture-of-experts approaches, cf. \cite{yuksel2012twenty} for a survey, which can roughly be described as machine learning methods where inhomogeneous data regions are divided into homogeneous ones, where simpler models can suffice for their description. A different approach to regression with PH distributions was considered in
 \cite{bladt2021semi,pricing2021}, where the proportional intensities (PI) model was proposed. The strength of PH regression models is that no threshold selection is required, and a tail behavior specification can be easily done by choosing an appropriate inhomogeneity function. Moreover, the interaction between the hidden states allows for complex density shapes, going beyond what simple mixing can account for, for a given number of experts. In essence, the latter property can have a parsimonious effect on the number of estimation parameters. However, the phase-type mixture-of-experts (PH-MoE) approach can obtain a wider range of variation for a fixed state-space size than the PI approach and can be faster to estimate for a small number of covariates.

The PH-MoE model is rather flexible, illustrated by two denseness results on: a) multinomial experiments with arbitrary distributions assigned to each outcome; b) more general regression models, subject to some technical conditions. The marginal and conditional distributions associated with the PH-MoE specification fall into the IPH class, for which many closed-form formulas exist, and their tail behavior is well understood. Furthermore, their estimation can be carried out using an ingenious decomposition of the fully observed likelihood into two components: one which may be maximized using a variant of the EM algorithm for PH distributions; and another component can be seen as a weighted one multinomial logistic regression problem.

The remainder of the paper is structured as follows. First, in Section \ref{sec:PH-MoE}, we provide a short reminder of IPH distributions, specify the main regression model, and derive its basic properties, along with the first denseness result. We then prove the denseness of PH-MoE on regression models in Section \ref{sec:dens} and provide an effective estimation technique based on the EM algorithm and weighted multinomial regression in Section \ref{sec:est}, along with a goodness of fit consideration.
In Section~\ref{sec:transforms}, we review some common choices of inhomogeneity functions for global fitting and introduce a new alternative to composite splicing models based on piecewise-continuous inhomogeneity functions.
Subsequently, we show in Section \ref{sec:examples} the practical feasibility of our approach on synthetic and real insurance data. Finally, Section \ref{sec:conclusion} concludes.

\section{Phase-type Mixture-of-Experts regression model}\label{sec:PH-MoE}

\subsection{Preliminaries}
Let $ ( J_t )_{t \geq 0}$ be a time-inhomogeneous Mar\-kov pure-jump process on the finite state space $\{1, \dots, p, p+1\}$, where states $1,\dots,p$ are transient and $p+1$ is absorbing. Then, the transition probabilities
\begin{align*}
p_{kl}(s,t)=\P(J_t=l|J_s=k)\,,\quad 0\le k,l\le p+1 \,,
\end{align*}
can be written in matrix form as 
$$\mat{P}(s,t)=\prod_{s}^{t}(\boldsymbol{I}+\boldsymbol{\Lambda}(u) d u):=\boldsymbol{I}+\sum_{i=1}^{\infty} \int_{s}^{t} \int_{s}^{u_{i}} \cdots \int_{s}^{u_{2}} \mathbf{\Lambda}\left(u_{1}\right) \cdots \mathbf{\Lambda}\left(u_{i}\right) d u_{1} \cdots \mathrm{d} u_{i}\,,$$
for $s<t,$ where $\mat{\Lambda}(t)$ is called the intensity matrix, that is, a matrix with negative diagonal elements and non-negative off-diagonal elements such that the rows sum to zero. If we further require that the matrices $\mat{\Lambda}(s)$ and $\mat{\Lambda}(t)$ commute for every $s<t$, this can be done by assuming the following structure of the intensity matrix 
\begin{align*}
	\mat{\Lambda}(t)=\lambda(t) \left( \begin{array}{cc}
		\bfT &  \bft \\
		\0 & 0
	\end{array} \right)\in\mathbb{R}^{(p+1)\times(p+1)}\,, \quad t\geq0\,,
\end{align*}
where $\bfT $ is a $p \times p$ sub-intensity matrix, $\bft$ is a $p$-dimensional column vector providing the exit rates to the absorbing state, $\0$ is a $p$-dimensional row vector of zeroes, and $\lambda(\cdot)$ is some known positive real function. Since the rows of the intensity matrix sum to zero, the relationship $\bft=- \bfT \, \bfe$ holds, where $\bfe$ denotes the $p$-dimensional column vector of ones.
In what follows, we always assume this structure of $\mat{\Lambda}(t)$ and that the function $\lambda(\cdot)>0$  satisfies for $y>0$
\begin{align}\label{lambda_restrictions}
(0,\infty)\ni\int_0^y \lambda(t)dt\stackrel{y\to\infty}{\to}\infty\,.
\quad\end{align}

For future reference, we write $\bfe_k$ for the $k$-th canonical basis vector in $\mathbb{R}^p$.  Concerning the sub-intensity matrix and the vector of exit rates, we introduce the following notation for their entries
$$\bfT=(t_{kl})_{k,l=1,\dots,p}\,,\quad \bft=(t_1,\dots,t_p)^{\mathsf{T}}\,.$$

We will make use of functions of matrices in the sequel. The standard unambiguous way of defining them is in terms of the Cauchy formula as follows. Let $h$ be any analytic function and $\mat{A}$ a square matrix. Then we define
\begin{align*}
	h( \mat{A})=\dfrac{1}{2 \pi i} \oint_{\Gamma}h(w) (w \mat{I} -\mat{A} )^{-1}dw \,,\quad \mat{A}\in \mathbb{R}^{p\times p} \,,
\end{align*}
with $\Gamma$ a simple path enclosing the eigenvalues of $\mat{A}$, and $\mat{I}$ is the identity matrix of the same dimension.

\begin{definition}
Let $\bfp_0$ be an initial distribution on $\{1,\dots,p\}$. Then, if $J_0\sim\bfp_0$, we say that
$$Y_0 = \inf \{ t >  0 : J_t = p+1 \}\,,$$ follows an inhomogeneous phase-type (IPH) distribution and we write \linebreak $Y_0 \sim \mbox{IPH}(\bfp_0,\bfT,\lambda)$.
\end{definition}


The density $f$ and distribution function $F$ of $Y_0 \sim  \mbox{IPH}(\bfp_0 , \bfT , \lambda )$ are explicit in terms of functions of matrices and given by
\begin{eqnarray*}
 f(y) &=& \lambda (y)\, \vect{\pi}_0\exp \left( \int_0^y \lambda (s)ds\ \mat{T} \right)\vect{t}\,,\quad y\ge0 \,, \label{eq:dens-IPH} \\
 F(y)&=&  1- \vect{\pi}_0\exp \left( \int_0^y \lambda (s)ds\ \mat{T} \right)\vect{e}\,,\quad y\ge0 \,.  \label{eq:cdf-IPH}
\end{eqnarray*}
 
Another attractive property of IPH distributions is that a random variable following this specification can be expressed as the transformation of a phase-type (PH) distributed random variable, that is, the homogenous case corresponding to $\lambda \equiv1 $.  More specifically, if
 $Y_0 \sim  \mbox{IPH}(\bfp_0 , \bfT , \lambda )$, then 
  \begin{equation}\label{gtrans}
Y_0 \eqd g(Z_0) \,,
\end{equation}
where $Z_0 \sim \mbox{PH}(\bfp_0 , \bfT )$ and $g$ is defined  through its inverse in terms of $\lambda$ by
\begin{equation*}
g^{-1}(y) = \int_0^y \lambda (s)ds \,,  \quad y\ge 0\, . \label{eq:transformation-g}
\end{equation*}
This representation is particularly useful to derive further properties of IPH distributions by exploiting the known PH machinery. For instance, the following explicit asymptotic behavior for the tails $\ov F = 1- F$ of IPH distributions can be deduced using this representation in conjunction with the corresponding asymptotic result for PH distribution:
\begin{align}\label{eq:IPHasymptotic}
	\ov F(y) \sim c [g^{-1}(y)]^{m -1} \exp({-\eta g^{-1}(y)}) \,, \quad y \to \infty \,,
\end{align}
where $c$ is a positive constant depending on $\bfp$ and $\bfT$, $-\eta$ is the largest real eigenvalue of $\bfT$, and $m$ is the size of the Jordan block associated with $\eta$. 

\subsection{The regression model}

Define the mapping $$\bfp: D\subset \mathbb{R}^d\to \Delta^{p-1}\,,$$
where
$ \Delta^{p-1}=\{(\pi_1,\dots,\pi_p)\in\mathbb{R}^{p}\mid\sum_{k}{\pi_k} = 1 \mbox{ and } \pi_k \ge 0 \mbox{ for all } k\}$ is the standard $(p-1)$-simplex. Thus, for any given $\vect{x}\in\mathbb{R}^d$, we may endow the process with the initial probabilities
$$  \P(J_0 = k)=\pi_{k}(\vect{x}):=(\bfp(\vect{x}))_k\,,\quad k = 1,\dots, p\,,$$
and  $\P(J_0 = p + 1) = 0$. 

As a particular consequence, the following random variable
\begin{align*}
	Y = \inf \{ t >  0 : J_t = p+1 \}\,,
\end{align*}
satisfies that
\begin{align*}
	Y \sim \mbox{IPH}(\bfp(\vect{x}),\bfT,\lambda)\quad \Leftrightarrow \quad J_0\sim\bfp(\vect{x})\,.
\end{align*}

\begin{definition}
Let $\bfX$ be a $d$-dimensional vector of covariates. Then we say that
 $$Y|\,\bfX \sim\mbox{IPH}(\bfp(\bfX),\bfT,\lambda)$$ is a phase-type mixture-of-experts (PH-MoE) model.
\end{definition}

\begin{remark}\rm
The above model obtains its name since we may write
\begin{align*}
\P(Y>y|\,\bfX=\vect{x})=\sum_{k=1}^p\P(Y>y|J_0=k)\pi_k(\vect{x})\,,
\end{align*}
which is a mixture of $p$ PH distributions with different initial distributions, each assigning all its mass to a given state. In particular, we have the simple identity
\begin{align*}
\E[Y|\,\bfX=\vect{x}]=\sum_{k=1}^p\E[Y|J_0=k]\pi_k(\vect{x})\,.
\end{align*}
 For instance, in the homogeneous case we obtain
\begin{align}\label{mean_expression}
\E[Y|\,\bfX=\vect{x}]=\sum_{k=1}^p\pi_k(\vect{x})\bfe_k^{ \mathsf{T}}[-\bfT]^{-1}\bfe=\bfp(\vect{x})^{ \mathsf{T}} [-\bfT]^{-1}\bfe\,.
\end{align}
\end{remark}

\begin{example}\rm
If $D=\{\vect{x}_0\}$ is a singleton, then the PH-MoE model exactly spans the class of IPH distributions.
\end{example}

The following result shows that random covariates do not extend the marginal distribution beyond the above example.

\begin{proposition}
Let $\bfX$ be a random vector in a convex $D\subset \mathbb{R}^d$. Then the PH-MoE model has marginal distribution given by
\begin{align*}
\mbox{IPH}(\bfp(\vect{x}^\ast),\bfT,\lambda)\,,
\end{align*}
for some $\vect{x}^\ast\in D$. In fact, $\bfp(\vect{x}^\ast)=\E(\bfp(\bfX))$.
\end{proposition}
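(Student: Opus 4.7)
The plan is to compute the unconditional survival function $\P(Y > y)$ by conditioning on $\bfX$ and then exploiting the fact that, in the survival formula for an IPH law, the matrix-exponential factor is a deterministic function of $y$ and does not depend on the covariates. First I would invoke the tower property together with the conditional specification $Y \mid \bfX \sim \mbox{IPH}(\bfp(\bfX), \bfT, \lambda)$ and the IPH survival function from Section~\ref{sec:PH-MoE} to write
\begin{align*}
\P(Y > y) \;=\; \E\bigl[\P(Y > y \mid \bfX)\bigr] \;=\; \E\Bigl[\bfp(\bfX)^{\mathsf{T}}\exp\!\Bigl(\int_0^y \lambda(s)\,ds\,\bfT\Bigr)\bfe\Bigr].
\end{align*}
Since the matrix exponential $\exp(\int_0^y \lambda(s)ds\,\bfT)\bfe$ is a deterministic $p$-dimensional vector, linearity of expectation lets us pull it out of the expectation, yielding
\begin{align*}
\P(Y > y) \;=\; \E[\bfp(\bfX)]^{\mathsf{T}}\exp\!\Bigl(\int_0^y \lambda(s)\,ds\,\bfT\Bigr)\bfe.
\end{align*}

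Next I would verify that the vector $\bftheta:=\E[\bfp(\bfX)]$ is a legitimate initial distribution, i.e.\ $\bftheta\in\Delta^{p-1}$. Non-negativity of each coordinate $\E[\pi_k(\bfX)]$ follows from the a.s.\ non-negativity of $\pi_k(\bfX)$, and since $\sum_{k=1}^p \pi_k(\bfX)=1$ almost surely, monotone convergence (or simply Fubini) gives $\sum_{k=1}^p \E[\pi_k(\bfX)] = 1$. Consequently, the displayed survival function is precisely that of an $\mbox{IPH}(\bftheta, \bfT, \lambda)$ law, which establishes the marginal identification up to naming the initial vector.

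The last and genuinely nontrivial step is to produce an $\vect{x}^\ast \in D$ with $\bfp(\vect{x}^\ast) = \bftheta$. Here the convexity of $D$ plays its role: since $\bfp(\bfX) \in \bfp(D)$ almost surely, the expectation $\bftheta$ lies in the convex hull of $\bfp(D)$. In the parameterizations of interest (in particular the multinomial logistic softmax used later for estimation), $\bfp$ maps the convex set $D$ onto a convex subset of the simplex, so the convex hull of $\bfp(D)$ coincides with $\bfp(D)$ itself, and the existence of $\vect{x}^\ast$ is immediate. I would therefore note that under any such parameterization the identity $\bfp(\vect{x}^\ast)=\E(\bfp(\bfX))$ holds; the main subtlety of the proof is exactly this appeal to convexity of the image, rather than any probabilistic difficulty, whereas the distributional computation itself is a one-line application of conditional expectation.
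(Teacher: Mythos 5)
Your computation of the marginal law is correct and is essentially the paper's argument: the paper marginalizes at the level of the initial state, writing $\P(J_0=k)=\int_D\pi_k(\vect{x})f(\vect{x})\,d\vect{x}$ and noting that the dynamics of $(J_t)_{t\ge 0}$ after initiation do not depend on $\bfX$, whereas you marginalize the survival function directly and pull the deterministic matrix exponential out of the expectation. These are the same idea, and your verification that $\E[\bfp(\bfX)]\in\Delta^{p-1}$ is a worthwhile (if routine) addition.

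The problem is your final step. You correctly identify that convexity of $D$ alone does not place $\E[\bfp(\bfX)]$ in the image $\bfp(D)$, and that what is really needed is convexity of $\bfp(D)$ (or that $\bfp(D)$ contain its own convex hull). But your claimed resolution --- that under the softmax parametrization $\bfp$ maps the convex set $D$ onto a convex subset of the simplex --- is false in general. Under softmax the log-odds $\log(\pi_k(\vect{x})/\pi_p(\vect{x}))=\vect{x}^{\mathsf T}(\bfalp_k-\bfalp_p)$ trace out an affine image of $D$, and pushing this forward through the (nonlinear) inverse log-odds map yields a curved submanifold of $\Delta^{p-1}$; for example, with $d=1$, $p=3$ and distinct slopes the image of an interval is a strictly curved arc in the $2$-simplex, whose points' averages leave the arc. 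So the existence of $\vect{x}^\ast$ does not follow from the argument you give. For $p=2$ one can rescue it by the intermediate value theorem (continuous image of a connected set is an interval containing the mean), and it holds whenever $\bfp$ is affine or surjective onto a convex set, but not for a generic softmax restricted to a hypercube. To be fair, the paper's own proof dispatches this point with the single phrase ``by convexity,'' which is equally insufficient as written; the substantive content of the proposition --- that the marginal is $\mbox{IPH}(\E[\bfp(\bfX)],\bfT,\lambda)$ --- is what both you and the paper actually establish, while the representability of $\E[\bfp(\bfX)]$ as $\bfp(\vect{x}^\ast)$ for some $\vect{x}^\ast\in D$ requires an extra hypothesis on $\bfp$ that neither argument supplies.
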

\begin{proof}
Let $\bfX$ have density $f:D\to\mathbb{R}_+$. We simply observe that by disintegration we get 
$$\P(J_0=k)=\int_D\pi_k(\vect{x})f(\vect{x})d\vect{x}=\pi_k(\vect{x}^\ast)\,,\quad k=1,\dots,p\,,$$ and since the process $(J_t)_{t\ge0}$ otherwise has the same dynamics after any initiation, the other parameters are unchanged. It remains to notice that $\vect{x}^\ast\in D$ by convexity.
\end{proof}

For the above reason, the PH-MoE model is most useful in its conditional form, and can be used for regression purposes. We now formulate a particularly advantageous parametrization for when $D=\mathbb{R}^d$.

\begin{definition}
We say that the PH-MoE  model with initial probabilities $\bfp(\bfX;\bfalp) = (\pi_k(\bfX;\bfalp))_{k = 1,\dots ,p}$ given by 
\begin{align}\label{eq:picov}
	\pi_k(\bfX;\bfalp) = \frac{\exp(\bfX^\mathsf{T} \bfalp_k)}{\sum_{j = 1}^{p} \exp(\bfX^\mathsf{T} \bfalp_j)}\,, \quad k = 1,\dots , p \,,
\end{align}
satisfies the softmax parametrization. Here, $\bfalp_k \in \overline{\mathbb{R}}^{d}$, $k = 1,\dots, p$, and $\bfalp = ( \bfalp_1^\mathsf{T}, \dots,  \bfalp_p^\mathsf{T})^\mathsf{T}\in \overline{\mathbb{R}}^{(p\times d)}$. 
\end{definition}

\begin{remark}\rm
In essence, we consider the coefficients of $\bfalp$ as assigning ``expertly" each observation to an initial distribution $\bfp(\bfX)$ according to their information $\bfX$. 

For the above parametrization, we have that the logarithm of the ratio between any two probabilities is linear in that for any $k, j\in\{1,\dots,p\}$,
\begin{align*}
\log\left(\frac{\pi_k(\bfX;\vect{\alpha})}{\pi_j(\bfX;\vect{\alpha})}\right)=\bfX^\mathsf{T}\left(\bfalp_k- \bfalp_j\right)=\sum_{i=1}^d X_i (\alpha_{ki}-\alpha_{ji})\,.
\end{align*}
However, for two individuals with covariate information $\bfX_1$ and $\bfX_2$, a class-specific correction arises, as follows:
\begin{align*}
\log\left(\frac{\pi_k(\bfX_1;\vect{\alpha})}{\pi_j(\bfX_2;\vect{\alpha})}\right)=\bfX_1^\mathsf{T}\bfalp_k- \bfX_2^\mathsf{T}\bfalp_j+\log\left(\frac{\sum_{i = 1}^{p} \exp(\bfX_2^\mathsf{T} \bfalp_i)}{\sum_{l = 1}^{p} \exp(\bfX_1^\mathsf{T} \bfalp_l)}\right).
\end{align*}
\end{remark}

In regression analyses, in particular, in the analysis of variance (ANOVA) it can often be the case that there is a discrepancy in the mean between observations belonging to two or more categories. However, the conditional distributions may not be Gaussian, or may even be different between different groups. We make a technical definition for such common situations.

\begin{definition}
Let $W_1,\dots,W_n$ be positive and continuous random variables having otherwise arbitrary distributions, and let $\eta\in\{1,\dots,n\}$ be a multinomial random variable, such that 
\begin{align*}
W_i {\ci} W_j\,,\:\: \forall i\neq j\,, \quad \mbox{and}\quad W_i {\ci}_{\bfX} \eta \,,\:\:\forall i\,,
\end{align*}
and such that $\bfX$ contains at least an intercept. Then we say that $W_\eta|\,\bfX$ follows a multinomial mixture distribution.
\end{definition}

\begin{proposition}\label{multi_mix_dens}
Let $W|\,\bfX$ follow a multinomial mixture distribution. Then there exist a sequence of PH-MoE models $(Y_m|\,\bfX)_{m\ge0}$ such that 
$$Y_m|\,\bfX\stackrel{d}{\to} W|\,\bfX\,, \quad m\to \infty\,.$$ 
Moreover, the softmax parametrization may be chosen.
\end{proposition}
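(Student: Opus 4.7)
My plan is to glue together two approximation steps: the denseness of PH distributions in the weak topology on positive-valued distributions (see \cite{asmussen2008applied}), applied componentwise to each $W_i$; and a softmax parametrization that realizes the target multinomial weights of $\eta\mid\bfX$ by absorbing the componentwise PH initial probabilities into the intercept of $\bfX$.

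For the first step, for each $i\in\{1,\dots,n\}$ I pick a sequence of PH distributions $Z_{m,i}\sim\mbox{PH}(\bfp^{(i,m)},\bfT^{(i,m)})$ of dimension $p_{m,i}$ with $Z_{m,i}\stackrel{d}{\to} W_i$ as $m\to\infty$. For each $m$ I aggregate these into a single Markov pure-jump process on $p_m:=\sum_i p_{m,i}$ transient states by taking the block-diagonal sub-intensity matrix $\bfT^{(m)}=\mathrm{diag}(\bfT^{(1,m)},\dots,\bfT^{(n,m)})$ with $\lambda\equiv 1$. Since blocks do not communicate, an initial distribution concentrated on block $i$ proportionally to $\bfp^{(i,m)}$ produces an absorption time distributed exactly as $Z_{m,i}$.

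For the second step, I index the states by pairs $(i,k)$ and exploit the intercept (say $X_1\equiv 1$) to set
\[
\bfalp_{(i,k)}=\vect{\beta}_i+\log(\pi_k^{(i,m)})\,\vect{u}_1,
\]
where $\vect{u}_1$ is the first canonical basis vector of $\R^d$ and the $\vect{\beta}_i\in\overline{\R}^d$ are softmax coefficients satisfying $\exp(\bfx^\mathsf{T}\vect{\beta}_i)/\sum_j\exp(\bfx^\mathsf{T}\vect{\beta}_j)=p_i(\bfx):=\P(\eta=i\mid\bfX=\bfx)$. A direct computation then gives $\pi_{(i,k)}(\bfx;\bfalp)=p_i(\bfx)\,\pi_k^{(i,m)}$, so the total mass in block $i$ equals $p_i(\bfx)$ and the within-block initial law is $\bfp^{(i,m)}$. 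Consequently, $Y_m\mid\bfX=\bfx$ has survival function $\sum_{i=1}^n p_i(\bfx)\,\P(Z_{m,i}>y)$, which converges pointwise in $y$ to $\sum_i p_i(\bfx)\,\P(W_i>y)=\P(W>y\mid\bfX=\bfx)$ as $m\to\infty$, yielding the desired weak convergence.

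The main technical hurdle is the softmax representability of the multinomial weights $p_i(\bfx)$. When $\eta\mid\bfX$ is itself generated by a multinomial logistic model the coefficients $\vect{\beta}_i$ exist by construction; for a more general $\eta\mid\bfX$ I approximate $p_i(\cdot)$ pointwise within the softmax family (for any fixed $\bfx$ this is immediate by $\vect{\beta}_i=\log p_i(\bfx)\,\vect{u}_1$), relying on the extended-real space $\overline{\R}^d$ introduced in the PH-MoE definition to accommodate degenerate masses $p_i(\bfx)\in\{0,1\}$, after which a standard diagonal extraction merges the componentwise PH approximation with the weight-wise softmax approximation into a single sequence indexed by $m$.
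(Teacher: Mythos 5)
Your proof is correct and follows essentially the same route as the paper's: approximate each component $W_i$ by PH laws, observe that the resulting finite mixture is again PH of dimension at most the sum of the component dimensions (your block-diagonal construction), and use the intercept to realize the required initial distribution within the softmax family. The only difference is one of explicitness: your formula $\bfalp_{(i,k)}=\vect{\beta}_i+\log(\pi_k^{(i,m)})\,\vect{u}_1$ constructs concretely what the paper's proof obtains by appealing to the surjectivity of the softmax map onto the simplex $\Delta^{p^\ast-1}$.
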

\begin{proof}
We have by definition that $W|\,\bfX\stackrel{d}{=}W_\eta|\,\bfX$ for $W_1,\dots,W_n$ some conditionally independent variables, and a conditionally independent multinomial variable $\eta\in\{1,\dots,n\}$. Given $\bfX$, and by the denseness of PH distributions, there exist sequences of PH distributed random variables $Y_{im},$ $i=1,\dots,n$, $m=1,2,\dots$, such that, as $m\to \infty$, 
$$Y_{im}\stackrel{d}{\to}W_i\,,\quad i=1,\dots,n\,.$$ 
Since $\eta$ takes finitely many values, it follows that even  
$$Y_{\eta m}|\,\bfX\stackrel{d}{\to}W_\eta|\,\bfX,\quad m\to \infty\,.$$ 
It remains to note that for any given $m$, $Y_{\eta m}|\,\bfX$ is a finite mixture of independent PH variables, and thus PH distributed as well, with dimension $p^\ast$ at most the sum of the individual mixture-component dimensions. Finally, since the covariates contain an intercept term, the softmax function (as a function of $\bfalp$) 
\begin{align*}
	\pi_k(\bfalp) = \frac{\exp(\bfX^\mathsf{T} \bfalp_k)}{\sum_{j = 1}^{p^*} \exp(\bfX^\mathsf{T} \bfalp_j)},\quad k=1,\dots,p^\ast\,,
\end{align*} 
is a surjective map from $\mathbb{R}^{(d\times p^\ast)}$ to $\Delta^{p^\ast-1}$, and we may choose, for each $m$, $\bfalp$ to exactly match the required initial distribution of $Y_{\eta m}|\,\bfX$.
\end{proof}
\begin{remark}\rm
In the above result, any pre-specified tail behavior of $W|\,\bfX$ may be exactly matched by all the $Y_m|\,\bfX,$ $m=1,2,\dots$, by using the appropriate inhomogeneity function $\lambda$. The details are straightforward but technical and thus omitted.
\end{remark}

\section{Denseness on regression models}\label{sec:dens}

This section is devoted to showing a stronger version of Proposition \ref{multi_mix_dens}, under some more restrictive conditions on the covariate space and the associated conditional distributions.

\begin{definition}
Let $\mathcal{A}$ be the set of possible values of the covariates $\bfX$. A severity regression model is the set of conditional distributions of claim severity, given the covariates, that is, the set of laws of 
$$Y|\,\bfX=\vect{x}\,,\quad \vect{x}\in \mathcal{A}\,.$$ 
Given a severity regression model, we say that a sequence of severity regression models converges weakly (respectively, uniformly weakly) to it, if all the associated conditional distributions converge weakly for each $\vect{x}\in\mathcal{A}$ (respectively, uniformly weakly in $\vect{x}\in\mathcal{A}$).
\end{definition}

\begin{definition}
A feature space $\mathcal{A}$ is said to be regular if it is of the form $\mathcal{A}=\{1\}\times [a,b]^{d-1},$ $a,b\in\mathbb{R}$, that is, the covariates contain an intercept and are otherwise contained in a hypercube.
\end{definition}

\begin{condition}
A regression model is said to satisfy the tightness and Lipschitz conditions on $\mathcal{A}$ if
\begin{align*}
\{\P(Y\in \cdot\,|\,\bfX=\vect{x})\}_{\vect{x}\in\mathcal{A}}
\end{align*}
is a tight family of distributions, and for each $y\ge0$, the function
\begin{align*}
\vect{x}\mapsto\P(Y\le y\,|\,\bfX=\vect{x})
\end{align*}
is Lipschitz continuous in $\mathcal{A}$.
\end{condition}

To allow zeroes in the vector of initial probabilities, we can assume without loss of generality that the vector of initial probabilities is of the form \ $\bfp(\bfX;\bfalp) =(\tilde{\bfp}^{\mathsf{T}}(\bfX;\bfalp), \bf0)^{\mathsf{T}} $, where $\tilde{\bfp}(\bfX;\bfalp) = (\pi_k(\bfX;\bfalp))_{k = 1,\dots, q}$ is a $q$-dimensional column vector, $q\leq p$, with 
\begin{align*}
	\pi_k(\bfX;\bfalp) = \frac{\exp(\bfX^\mathsf{T} \bfalp_k)}{\sum_{j = 1}^{q} \exp(\bfX^\mathsf{T} \bfalp_j)}\,, \quad k = 1,\dots ,q \,.
\end{align*}
Indeed, we can always reorder the states of the PH representation in such a way that the first $q\leq p$ entries of $\bfp(\bfX;\bfalp) $ are the ones corresponding to the values different from zero.

\begin{proposition}[Denseness]\label{prop:dense}
Let a regression model satisfy the tightness and Lipschitz conditions on a regular $\mathcal{A}$. Then, there exists a sequence of PH-MoE regression models converging uniformly weakly to it.
	\end{proposition}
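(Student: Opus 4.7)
The plan is to combine three ingredients: compactness of the regular feature space $\mathcal{A}=\{1\}\times[a,b]^{d-1}$, the classical denseness of (homogeneous) phase-type distributions in the weak topology on $[0,\infty)$, and the ability of the softmax map to approximate indicators of a Voronoi partition. The approximating PH-MoE model at level $m$ will be built from a finite grid $\vect{x}_1,\dots,\vect{x}_{N_m}$ in $\mathcal{A}$, a block-diagonal sub-intensity matrix $\bfT^{(m)}=\mathrm{diag}(\bfT_1^{(m)},\dots,\bfT_{N_m}^{(m)})$, and a softmax-parametrized initial distribution that routes mass into the block associated with the grid point closest to the given $\vect{x}$.

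First I would fix $\epsilon>0$ and, using tightness, choose $M>0$ such that $\P(Y>M\,|\,\bfX=\vect{x})<\epsilon$ uniformly in $\vect{x}\in\mathcal{A}$. Compactness of $\mathcal{A}$ then allows a cover by balls of radius $\delta$ centered at $\vect{x}_1,\dots,\vect{x}_{N_m}$; combining the pointwise Lipschitz hypothesis (applied on a finite $\epsilon$-net of quantile levels within $[0,M]$) with the uniform tail control from tightness yields a L\'evy-metric bound $d_L(F(\cdot|\vect{x}),F(\cdot|\vect{x}_i))<\epsilon$ whenever $\vect{x}$ lies in the ball around $\vect{x}_i$. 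Next, at each $\vect{x}_i$, the weak denseness of PH laws supplies an approximation $G_i^{(m)}=\mathrm{PH}(\vect{p}_i^{(m)},\bfT_i^{(m)})$ with $d_L(G_i^{(m)},F(\cdot|\vect{x}_i))<\epsilon$.

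To assemble the PH-MoE, I set $\bfT^{(m)}=\mathrm{diag}(\bfT_1^{(m)},\dots,\bfT_{N_m}^{(m)})$; since the blocks do not communicate, placing all initial mass on block $i$ with distribution $\vect{p}_i^{(m)}$ reproduces $G_i^{(m)}$ exactly. For the softmax parametrization I label the assembled states as pairs $(i,k)$ and take $\bfalp_{(i,k)}=K\vect{\beta}_i+(\log p_{ik}^{(m)})\vect{u}$, where $\vect{u}$ is the canonical vector of the intercept coordinate and $\vect{\beta}_i$ encodes the linearized nearest-neighbor classifier (i.e.\ carries $\vect{x}_i$ in the non-intercept coordinates and $-\tfrac12\|\vect{x}_i\|^2$ in the intercept). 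A short computation factors this as $\pi_{(i,k)}(\vect{x};\bfalp)=p_{ik}^{(m)}\,\tilde\pi_i(\vect{x};K\vect{\beta})$, where $\tilde\pi_i$ is a softmax across $N_m$ groups whose argument is $-\tfrac12\|\vect{x}-\vect{x}_i\|^2$ (up to an $i$-independent constant); as $K\to\infty$, $\tilde\pi_i(\vect{x})$ concentrates on the indices whose grid points are closest to $\vect{x}$, all of which necessarily lie within $\delta$ of $\vect{x}$ because the grid is a $\delta$-net.

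The error estimate then follows by a triangle inequality on $F_m(y|\vect{x})=\sum_i\tilde\pi_i(\vect{x})G_i^{(m)}(y)$: the deviation $|F_m(y|\vect{x})-F(y|\vect{x})|$ decomposes into per-block PH errors (uniformly $<\epsilon$), Lipschitz errors for the near indices (uniformly $<\epsilon$), and leakage of $\tilde\pi$-mass to far indices, which is uniformly controlled by $N_m^2\exp(-K\gamma^2/2)$ for a suitable gap $\gamma$. Sending $\epsilon,\delta\to0$ and $K\to\infty$ along $m$ delivers $\sup_{\vect{x}\in\mathcal{A}}d_L(F_m(\cdot|\vect{x}),F(\cdot|\vect{x}))\to0$. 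I expect the main obstacle to be the softmax step: the concentration to the nearest-neighbor assignment must be made uniform across all of $\mathcal{A}$, and in particular must be handled at Voronoi boundaries, where softmax inevitably splits mass among several equidistant grid points; the saving grace is that all such equidistant points are themselves within $\delta$ of $\vect{x}$, so the Lipschitz and PH approximation errors still dominate the resulting mixture. A secondary technical point is upgrading the pointwise (in $y$) Lipschitz hypothesis to a L\'evy-metric bound uniform in $\vect{x}$, which is where the combination with tightness is essential.
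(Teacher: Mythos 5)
Your argument is correct in outline, but it takes a genuinely different route from the paper. The paper's proof is essentially a two-line reduction: it verifies that PH laws with representation $(\bfe_1,\bfT)$ can degenerate to Dirac measures (Property 3 of Proposition 3.1 in \cite{fung2019class}), invokes Theorem 3.3 of \cite{fung2019class} to conclude that softmax-weighted mixtures $\sum_j \pi_j(\bfX;\bfalp)F_j(y;\bfe_1,\bfT_j)$ are uniformly weakly dense, and then observes that any such mixture \emph{is} a PH-MoE model, with block-diagonal sub-intensity matrix and initial vector $(\pi_1(\bfX)\bfe_1,\dots,\pi_q(\bfX)\bfe_1)$ reordered so the positive entries come first. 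You instead prove the denseness from scratch: a $\delta$-net of the compact feature space, PH approximation of each conditional law at the grid points (using the full denseness of PH laws rather than only their Dirac degeneracy), and softmax coefficients of the form $K\vect{\beta}_i+(\log p_{ik})\vect{u}$ that factor into a product of the within-block initial law and a Gaussian-kernel soft nearest-neighbour classifier --- this is, in effect, a self-contained reproof of the cited LRMoE theorem specialized to PH experts, fused with the paper's block-diagonal observation. What the paper's route buys is brevity and a clean separation of concerns (all analytic work is outsourced to the cited theorem); what yours buys is an explicit, checkable construction of the approximating $\bfalp$ and $\bfT$, and independence from the external reference. One spot in your sketch deserves care: uniform concentration of the softmax on the Voronoi cells fails near cell boundaries, where the gap between the smallest and second-smallest squared distances vanishes. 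The standard fix --- which your ``saving grace'' remark gestures at but does not quite pin down --- is to enlarge the ``near'' radius from $\delta$ to, say, $2\delta$, so that all indices receiving non-negligible mass are within $2\delta$ of $\vect{x}$ (incurring Lipschitz error $2L\delta$), while indices beyond $2\delta$ have a squared-distance gap of at least $3\delta^2$ from the minimizer and hence total mass at most $N_m\exp(-3K\delta^2/2)$, uniformly in $\vect{x}$. With that adjustment, and your quantile-grid upgrade of the pointwise Lipschitz hypothesis to a uniform L\'evy-metric bound on $[0,M]$, the argument closes.
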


\begin{proof}
{\color{black} The claim follows from Theorem 3.3 in \cite{fung2019class} by noticing that LRMoE models with Erlang distributed severities (which satisfy Property 3 of Proposition 3.1 in \cite{fung2019class})  are particular instances of the PH-MoE model.}
\end{proof}

\begin{remark}\rm 
	Proposition~\ref{prop:dense} also implies that the PH-MoE models, with fixed inhomogeneity transformation $g$, form a dense class on the set of univariate severity regression distributions.
	
	This is relevant since it allows us to obtain different tail behaviors for modeling claim severities. For instance, in \cite{fung2019class}, it is shown that Pareto distributions fail to fulfill the  denseness conditions in the LRMoE model. 
	This implies that for a fixed splicing threshold, denseness and heavy-tails are not possible in that setting. In contrast, Pareto tail behavior is now possible using a PH-MoE model, while still preserving the denseness property. 
	
	 {\color{black} It is worth mentioning that another alternative was recently introduced in \cite{fung2020new}, which was termed the TG-LRMoE model.  The main idea of this model consists of transforming Gamma distributed random variables to obtain heavy tails for the severity distributions (including Pareto tails). Note, however, that when considering Gamma random variables with integer shape parameters (i.e., Erlang) in the TG-LRMoE specification, we obtain a particular case of a PH-MoE model with intensity $\lambda(y) = (1 + y)^{\gamma -1}$, $\gamma >0$.}
	
\end{remark}

\section{Estimation}\label{sec:est}

\subsection{The EM algorithm}

Suppose that we have a PH-MoE specification
\begin{align*}
Y|\bfX\sim\mbox{IPH}( \bfp(\bfX) , \bfT,\lambda)\,.
\end{align*}
By a simple inhomogeneity transformation, we may momentarily concentrate on the homogeneous case as follows:
\begin{align*}
Z|\bfX :=g^{-1}(Y|\bfX)\sim\mbox{PH}(  \bfp(\bfX) , \bfT )\,.
\end{align*}

Now, let $B_k(\bfX)$ be the number of times that the process $(J_t)_{t\geq0}$ with initial distribution $\bfp(\bfX)$ starts in state $k$, $N_{kl}(\bfX)$ the total number of jumps from state $k$ to $l$ conditional on $\bfp(\bfX)$, $N_k(\bfX)$ the number of times that we reach the absorbing state $p+1$ from state $k$ conditional on $\bfp(\bfX)$, and let $V_k(\bfX)$ be the total time that the underlying Markov jump process spends in state $k$ prior to absorption given $\bfp(\bfX)$.
Then, given a sample of absorption times $\vect{z}=(z_1,\dots,z_N)^{\mathsf{T} }$ and the corresponding paired covariate information $\overline{\vect{x}}=(\vect{x}_1,\dots,\vect{x}_N)$, the completely observed likelihood can be written in terms of the previously defined statistics as follows:
\begin{align*}
&\mathcal{L}_c( \bfp , \bfT |\vect{z},\overline{\vect{x}}) \\
&\quad =\prod_{i=1}^N\mathcal{L}_c( \bfp , \bfT | {z}_i,\bfX=\vect{x}_i)\nonumber\\
&\quad=\prod_{i=1}^N\prod_{k=1}^{p}{\pi_k(\vect{x}_i)}^{B_k(\vect{x}_i)} \prod_{k=1}^{p}\prod_{l\neq k} {t_{kl}}^{N_{kl}(\vect{x}_i)}\exp({-t_{kl}V_k(\vect{x}_i)})\prod_{k=1}^{p}{t_k}^{N_k(\vect{x}_i)}\exp({-t_{k}V_k(\vect{x}_i)}) \\
&\quad=\left(\prod_{i=1}^N\prod_{k=1}^{p}{\pi_k(\vect{x}_i)}^{B_k(\vect{x}_i)}\right) \prod_{k=1}^{p}\prod_{l\neq k} {t_{kl}}^{\sum_{i=1}^N N_{kl}(\vect{x}_i)}\exp\Big({-t_{kl}\sum_{i=1}^N V_k(\vect{x}_i)}\Big) \\
& \quad \quad  \times \prod_{k=1}^{p}{t_k}^{\sum_{i=1}^N N_k(\vect{x}_i)}\exp\Big({-t_{k}\sum_{i=1}^N V_k(\vect{x}_i)}\Big)\\
&\quad=\left(\prod_{i=1}^N\prod_{k=1}^{p}{\pi_k(\vect{x}_i)}^{B_k(\vect{x}_i)}\right) \prod_{k=1}^{p}\prod_{l\neq k} {t_{kl}}^{N_{kl}}\exp({-t_{kl}V_k})\prod_{k=1}^{p}{t_k}^{N_k}\exp({-t_{k}V_k})\label{complete_likelihood}\,,
\end{align*}
with
\begin{align*}
N_{kl}:=\sum_{i=1}^N N_{kl}(\vect{x}_i)\,,\quad V_k:=\sum_{i=1}^N V_k(\vect{x}_i)\,,\quad N_k:=\sum_{i=1}^N N_k(\vect{x}_i)\,.
\end{align*}
The above specification partially belongs to the exponential family of distributions and thus has semi-explicit maximum likelihood estimators.

Since the full-trajectory data is not observed, we employ the expectation-maximization (EM) algorithm to estimate part of the MLE iteratively. This implies that at each iteration, the conditional expectations of the sufficient statistics $B_k(\vect{x}_i)$, $N_{kl}$, $N_k$, and $V_k$ given the absorption times $\vect{z}$ are computed, corresponding to the E-step. Then  $\mathcal{L}_c( \bfp , \bfT ,\vect{z})$ is maximized by replacing the values of the statistics by their corresponding expected values from the previous step, obtaining in this way updated parameters $( \bfp , \bfT )$, commonly referred to as the M-step.

Then the detailed formulas are given as follows: 

\begin{enumerate} 
\item[ 1)]\textit{E-step, conditional expectations:} 
\begin{align*}
    \mathbb{E}(B_k(\vect{x}_i)\mid {Z}={z}_i,\bfX=\vect{x}_i)=\frac{\pi_k(\vect{x}_i) {\bfe_k}^{ \mathsf{T}}\exp( \bfT z_i) \bft }{ \bfpi(\vect{x}_i) \exp( \bfT z_i) \bft }\,,\quad i=1,\dots,N\,,
\end{align*}
\begin{align*}
 \mathbb{E}(V_k\mid \mat{Z}=\vect{z},\overline{\vect{x}})=\sum_{i=1}^{N} \frac{\int_{0}^{z_i}{\bfe_k}^{ \mathsf{T}}\exp( \bfT (z_i-u)) \bft  \bfpi(\vect{x}_i) \exp( \bfT u)\vect{e}_kdu}{ \bfpi(\vect{x}_i) \exp( \bfT z_i) \bft }\,,   
\end{align*}
\end{enumerate}

\begin{align*}
\mathbb{E}(N_{kl}\mid \mat{Z}=\vect{z},\overline{\vect{x}})=\sum_{i=1}^{N}t_{kl} \frac{\int_{0}^{z_i}{\bfe_l}^{\mathsf{T}}\exp( \bfT (z_i-u)) \bft \, \bfpi(\vect{x}_i) \exp( \bfT u)\vect{e}_kdu}{ \bfpi(\vect{x}_i) \exp( \bfT z_i) \bft }\,,
\end{align*}
\begin{align*}
\mathbb{E}(N_k\mid \mat{Z}=\vect{z},\overline{\vect{x}})=\sum_{i=1}^{N} t_k\frac{ \bfpi(\vect{x}_i)  \exp( \bfT z_i){\bfe}_k}{ \bfpi(\vect{x}_i) \exp( \bfT z_i) \bft }\,.
\end{align*}
\begin{enumerate}

\item[2)] \textit{M-step, explicit maximum likelihood estimators:} 
\begin{align*}
\hat t_{kl}=\frac{\mathbb{E}(N_{kl}\mid \mat{Z}=\vect{z},\overline{\vect{x}})}{\mathbb{E}(V_{k}\mid \mat{Z}=\vect{z},\overline{\vect{x}})} \,,\quad
\hat t_{k}=\frac{\mathbb{E}(N_{k}\mid \mat{Z}=\vect{z},\overline{\vect{x}})}{\mathbb{E}(V_{k}\mid \mat{Z}=\vect{z},\overline{\vect{x}})}\,,\quad \hat t_{kk}=-\sum_{l\neq k} \hat t_{kl}-\hat t_k \,.
\end{align*}

\item[3)] \textit{R-step, weighted multinomial regression estimation:} 
\begin{align*}
\hat\bfp(\cdot)=\argmax_{\bfp(\cdot)\in \Delta^{p-1}} \left(\prod_{i=1}^N\prod_{k=1}^{p}{\pi_k(\vect{x}_i)}^{\mathbb{E}(B_k(\vect{x}_i)\mid {Z}={z}_i,\bfX=\vect{x}_i)}\right),
\end{align*}
where as before $ \Delta^{p-1}$ is the standard $(p-1)$-simplex. 
\end{enumerate}

Finally, to incorporate the inhomogeneity transformation $g(\cdot)$, we assume that this is a parametric function depending on some vector $\bftheta$, that is, we consider $g(\cdot ; \bftheta)$. Then, $\bftheta$ is updated in a subsequent step consisting of direct maximization of the incomplete likelihood function with respect to (solely) this parameter. 

\begin{remark}\rm
In general, the set of all functions in the simplex is too broad, and a parametric family is chosen -- such as the softmax functions. Even then, no explicit solution for the R-step is available. We describe the entire procedure for the softmax case in Algorithm \ref{alg:IPHMoE}.

In view that the R-step is computed numerically even for the simplest logistic case, we see that Algorithm \ref{alg:IPHMoE} easily extends to the case where an arbitrary regression model with a categorical response is used to predict the initial Markov probabilities, for instance, when specifying $\vect{x}\mapsto\bfp(\vect{x})$ as a neural network. In this framework, the multinomial logistic regression model can be seen as a $0$-layer neural network.
\end{remark}

\begin{algorithm}[]
\caption{{EM algorithm for PH-MoE (Softmax parametrization)}}\label{alg:IPHMoE}
\begin{algorithmic}
\State \textit{\textbf{Input}: Positive data points $\vect{y}=(y_1,\dots,y_N)^{\mathsf{T}}$, covariates $\vect{x}_1,\dots,\vect{x}_N$, and initial parameters $( \bfalp , \bfT ,\bftheta)$.}\\
\begin{enumerate} 

\item[ 1)]\textit{Mixture specification:} Set 
\begin{align*}
	\pi_k(\vect{x}_i) = \pi_k(\vect{x}_i;\bfalp) = \frac{\exp(\vect{x}_i^\mathsf{T} \bfalp_k)}{\sum_{j = 1}^{p} \exp(\vect{x}_i^\mathsf{T} \bfalp_j)}\,,\quad i=1,\dots,N\,,\:\:k =1,\dots,p\,.
\end{align*}

\item[ 2)]\textit{Inhomogeneity transformation:} Transform the data into 
$$z_i=g^{-1}(y_i;  \bftheta )\,,\quad i=1,\dots,N\,.$$

\item[ 3)]\textit{E-step:} Compute the statistics
\begin{align*}
    \mathbb{E}(B_k(\vect{x}_i)\mid {Z}={z}_i,\bfX=\vect{x}_i)=\frac{\pi_k(\vect{x}_i) {\bfe_k}^{ \mathsf{T}}\exp( \bfT z_i) \bft }{ \bfpi(\vect{x}_i) \exp( \bfT z_i) \bft }\,,\quad i=1,\dots,N\,,
\end{align*}
\begin{align*}
 \mathbb{E}(V_k\mid \mat{Z}=\vect{z},\overline{\vect{x}})=\sum_{i=1}^{N} \frac{\int_{0}^{z_i}{\bfe_k}^{ \mathsf{T}}\exp( \bfT (z_i-u)) \bft  \bfpi(\vect{x}_i) \exp( \bfT u)\vect{e}_kdu}{ \bfpi(\vect{x}_i) \exp( \bfT z_i) \bft }\,,   
\end{align*}
\begin{align*}
\mathbb{E}(N_{kl}\mid \mat{Z}=\vect{z},\overline{\vect{x}})=\sum_{i=1}^{N}t_{kl} \frac{\int_{0}^{z_i}{\bfe_l}^{\mathsf{T}}\exp( \bfT (z_i-u)) \bft \, \bfpi(\vect{x}_i) \exp( \bfT u)\vect{e}_kdu}{ \bfpi(\vect{x}_i) \exp( \bfT z_i) \bft }\,,
\end{align*}
\begin{align*}
\mathbb{E}(N_k\mid \mat{Z}=\vect{z},\overline{\vect{x}})=\sum_{i=1}^{N} t_k\frac{ \bfpi(\vect{x}_i)  \exp( \bfT z_i){\bfe}_k}{ \bfpi(\vect{x}_i) \exp( \bfT z_i) \bft }\,.
\end{align*}

\item[4)] \textit{M-step: Let} 
\begin{align*}
\hat t_{kl}=\frac{\mathbb{E}(N_{kl}\mid \mat{Z}=\vect{z},\overline{\vect{x}})}{\mathbb{E}(V_{k}\mid \mat{Z}=\vect{z},\overline{\vect{x}})}\,,\quad
\hat t_{k}=\frac{\mathbb{E}(N_{k}\mid \mat{Z}=\vect{z},\overline{\vect{x}})}{\mathbb{E}(V_{k}\mid \mat{Z}=\vect{z},\overline{\vect{x}})}\,,\quad \hat t_{kk}=-\sum_{l\neq k} \hat t_{kl}-\hat t_k\,.
\end{align*}

\item[5)] \textit{R-step:} Maximize the weighted multinomial logistic regression
\begin{align*}
	\hat\bfalp=\argmax_{\bfalp\in\overline{\mathbb{R}}^{(p\times d)}}\sum_{i=1}^{N}\sum_{k = 1 }^{p} \mathbb{E}(B_k(\vect{x}_i)\mid {Z}={z}_i,\bfX=\vect{x}_i) \log(\pi_k(\vect{x}_i;\bfalp) )\,,
\end{align*} 
and set
\begin{align*}
	\hat\pi_k(\vect{x}_i) = \pi_k(\vect{x}_i;\hat\bfalp) = \frac{\exp(\vect{x}_i^\mathsf{T} \hat\bfalp_k)}{\sum_{j = 1}^{p} \exp(\vect{x}_i^\mathsf{T} \hat\bfalp_j)}\,,\quad i=1,\dots,N\,,\:\:k =1,\dots,p\,.
\end{align*}

\item[6)]  \textit{Inhomogeneity optimization:} Maximize
	\begin{align*}
	\hat{ \bftheta }
	&= \argmax_{ \bftheta } \sum_{i=1}^{N} \log \left( \lambda(y_i ;  \bftheta ) \hat{\vect{\pi}}^\mathsf{T}(\vect{x}_i) \exp\left({ \int_{0}^{y_i} \lambda(s ;  \bftheta )  ds \ \hat{\bfT} }\right) \hat{\bft} \right) \,.
	\end{align*}

\item[7)] Update the current parameters to $({\bfalp},\mat{T}, \bftheta ) =(\hat{{\bfalp}},\hat{\mat{T}}, \hat{ \bftheta })$. Return to step 1 unless a stopping rule is satisfied.
\end{enumerate}
    \State \textit{\textbf{Output}: Fitted representation $( \bfalp , \bfT ,\bftheta)$.}
\end{algorithmic}
\end{algorithm}

Direct calculations, or general results from EM theory, yield the following result.

\begin{proposition}
The likelihood function is increasing at each iteration of Algorithm \ref{alg:IPHMoE}. For a given $p$, the likelihood is also bounded, and we guarantee convergence to a (possibly local) maximum.
\end{proposition}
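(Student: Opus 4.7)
The plan is to recast Algorithm \ref{alg:IPHMoE} as an Expectation--Conditional--Maximization (ECM) scheme augmented by a direct likelihood-ascent step for the inhomogeneity parameter $\bftheta$, and then invoke the classical EM monotonicity and convergence theory. The starting observation is that the complete-data log-likelihood factors as
\begin{align*}
\log\mathcal{L}_c(\bfalp,\bfT,\bftheta\mid \bfy,\overline{\vect{x}}) = \ell_\alpha(\bfalp) + \ell_T(\bfT) + \ell_\theta(\bftheta\mid \bfalp,\bfT),
\end{align*}
where $\ell_\alpha$ depends on $\bfalp$ only through the counts $B_k(\vect{x}_i)$ and the softmax probabilities, $\ell_T$ depends on $\bfT$ only through the aggregate statistics $N_{kl},N_k,V_k$, and $\ell_\theta$ collects the Jacobian and inhomogeneity contributions of the transformation $z_i=g^{-1}(y_i;\bftheta)$. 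Taking conditional expectations given $\bfy$ under the current parameters, the expected complete log-likelihood $Q(\phi\mid\phi^{(t)})$ inherits this separability, so that the M-step and R-step of the algorithm become exact (respectively numerical) conditional maximizations of $Q$ in the $\bfT$ and $\bfalp$ blocks independently.

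The monotonicity claim then follows from the standard EM identity
\begin{align*}
\ell(\phi;\bfy) - \ell(\phi';\bfy) = \bigl[Q(\phi\mid\phi') - Q(\phi'\mid\phi')\bigr] + K(\phi'\,\|\,\phi),
\end{align*}
where $\ell$ denotes the observed log-likelihood and $K(\phi'\,\|\,\phi)\ge 0$ is the Kullback--Leibler divergence between the conditional laws of the latent Markov jump process given $\bfy$ at parameters $\phi'$ and $\phi$ (Gibbs' inequality). Since steps 4 and 5 each produce an update that does not decrease $Q(\cdot\mid\phi^{(t)})$, the observed likelihood does not decrease after the $(\bfT,\bfalp)$ update. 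Step 6 is not an EM step but a direct maximization of $\ell$ in $\bftheta$ with $(\bfalp,\bfT)$ held fixed, and hence is also non-decreasing. Chaining the three sub-updates over a single iteration yields the first claim.

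For boundedness, I would argue that for fixed state-space size $p$ and any fixed strictly positive sample, the IPH density admits a pointwise bound $f(y;\bfp,\bfT,\bftheta)\le C(y,p)<\infty$ uniform over admissible parameters. This is verified by bounding $\bfpi \exp(\bfT y)\bft$ over the compact simplex of initial distributions, using that $\bft=-\bfT\vect{e}$ and that admissible $\bfT$ have spectra in $\{\mathrm{Re}(z)\le 0\}$, which precludes the ``spike at zero'' pathology familiar from Gaussian mixture likelihoods; the integrability condition \eqref{lambda_restrictions} on $\lambda$ carries the bound over to the inhomogeneous case. Taking products over the sample yields an a priori finite upper bound on $\ell$; combined with monotonicity, the sequence $\ell(\phi^{(t)};\bfy)$ converges. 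Convergence of the iterates to a stationary point -- and, under the usual regularity (continuity of $Q$ in both arguments, isolated local maxima on a compact sub-level set), to a local maximizer -- then follows from Wu's classical EM convergence theorem.

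The main obstacle is the boundedness step: one must carefully rule out degenerate drifts in which entries of $\bfalp_k$ diverge (so that the softmax collapses to a vertex of the simplex) or eigenvalues of $\bfT$ escape to $-\infty$, and this is handled either by a compactification argument or by restricting a priori to a bounded effective parameter region. Once boundedness is in hand, the remaining monotonicity and convergence-to-a-stationary-point arguments are essentially bookkeeping given the clean ECM decomposition above.
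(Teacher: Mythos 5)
Your argument is correct and is essentially the paper's own: the authors give no written proof beyond the remark that the result follows from ``direct calculations, or general results from EM theory,'' and your ECM decomposition into the $\bfalp$-, $\bfT$- and $\bftheta$-blocks, the standard $Q$-function/Kullback--Leibler monotonicity identity, and the appeal to Wu-type convergence is precisely the standard machinery they are invoking. Your treatment of boundedness (and your honest flagging of the degenerate-drift issue for $\bfalp$ and the spectrum of $\bfT$) is in fact more explicit than anything in the paper, so there is nothing to reconcile beyond noting that you have supplied details the authors omitted.
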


Notice that although convergence occurs, even if the parameters are such that the MLE of the PH distribution is asymptotically consistent, convergence to such MLE is still not guaranteed.

\subsection{Censoring}
In applications, an observation may be partially observed, in that only upper and/or lower bounds may be determined, but not its actual size. This incurs in a large bias if the bounds are far apart, and thus a statistical correction is required. Below we outline such adaptation to the estimation technique for PH-MoE models.

In essence, the EM algorithm~\ref{alg:IPHMoE} can be modified to work with censored observations, with just some adjustments on the formulas of the E-step being required. Recall that a data point is said to be right-censored at $a$ if it takes an unknown value above $a$, left-censored at $b$ if it takes an unknown value below $b$, and more generally interval-censored at $(a, b]$ if it takes an unknown value within the interval $(a, b]$. Moreover, note that for any censored observation of a PH-MoE model $ Y| \bfX \sim\mbox{IPH}( \bfp(\bfX) , \bfT,\lambda) $, the inhomogeneity transformation $g^{-1}(\cdot)$ results on a censored observation (of the same type) in the homogeneous setting $Z|\bfX = g^{-1}(Y |\bfX)\sim \mbox{PH}( \bfp(\bfX) , \bfT)$, meaning that we formally only need to deal with the latter case. 

In the following, we provide the explicit formulas for the E-step in the interval-censoring setting. Results for left and right censoring then follow as special cases, given that left-censoring can be seen as interval-censoring with $a = 0$ and right-censoring is retrieved by fixing $a$ and letting $b \to \infty$. 
Thus, for a single generic interval-censored observation $Z \in (a, b]$ with covariate information $\bfX=\vect{x}$, we have that
\begin{align*}
    \mathbb{E}(B_k(\vect{x})\mid Z \in (a, b]\,,\bfX=\vect{x}) &= \frac{\pi_k(\vect{x}) {\bfe_k}^{ \mathsf{T}}\exp( \bfT a) \bfe - \pi_k(\vect{x}) {\bfe_k}^{ \mathsf{T}}\exp( \bfT b) \bfe }{ \bfpi(\vect{x}) \exp( \bfT a) \bfe - \bfpi(\vect{x}) \exp( \bfT b) \bfe }\,,
\end{align*}

\begin{align*}
 & \mathbb{E}(V_k\mid Z\in (a, b],\bfX=\vect{x}) \\
 &\quad = \frac{1}{  \bfpi(\vect{x}) \exp( \bfT a) \bfe - \bfpi(\vect{x}) \exp( \bfT b) \bfe} \Bigg[  \int_{a}^{b}\bfpi(\vect{x}) \exp( \bfT u) \bfe_k du \\
 & \qquad \qquad \qquad \qquad \qquad \qquad \qquad \qquad -  \int_{0}^{b}{\bfe_k}^{ \mathsf{T}}\exp( \bfT (b-u)) \bft  \bfpi(\vect{x}) \exp( \bfT u)\vect{e}_kdu \\
& \qquad \qquad \qquad \qquad \qquad \qquad \qquad \qquad + \int_{0}^{a}{\bfe_k}^{ \mathsf{T}}\exp( \bfT (a-u)) \bft  \bfpi(\vect{x}) \exp( \bfT u)\vect{e}_k du\Bigg]\,,   
\end{align*}

\begin{align*}
& \mathbb{E}(N_{kl}\mid Z \in (a, b]\,,\bfX =  \vect{x}) \\
&\quad = \frac{t_{kl}}{  \bfpi(\vect{x}) \exp( \bfT a) \bfe - \bfpi(\vect{x}) \exp( \bfT b) \bfe } \Bigg[  \int_{a}^{b}\bfpi(\vect{x}) \exp( \bfT u) \bfe_k du \\
& \qquad \qquad \qquad \qquad \qquad \qquad \qquad \qquad- \int_{0}^{b}{\bfe_l}^{\mathsf{T}}\exp( \bfT (b-u)) \bft \, \bfpi(\vect{x}) \exp( \bfT u)\vect{e}_kdu \\
& \qquad \qquad \qquad \qquad \qquad \qquad \qquad \qquad+ \int_{0}^{a}{\bfe_l}^{\mathsf{T}}\exp( \bfT (a-u)) \bft \, \bfpi(\vect{x}) \exp( \bfT u)\vect{e}_kdu \Bigg]\,,
\end{align*}
\begin{align*}
\mathbb{E}(N_k\mid Z \in (a, b]\,,\bfX=\vect{x}) &= t_k\frac{ \int_{a}^{b} \bfpi(\vect{x})  \exp( \bfT u){\bfe}_k du}{  \bfpi(\vect{x}) \exp( \bfT a) \bfe - \bfpi(\vect{x}) \exp( \bfT b) \bfe }\,.
\end{align*}
The other steps of the algorithm are unchanged.

\subsection{Goodness of fit for phase-type regression models}\label{sec:goodness}
We propose a common visual tool for assessing the goodness of fit of the overall model. 
The procedure is described for the case when right-censored observations are present since it is the most common scenario in applications.
We define the residuals of a PH-MoE model by
\begin{align*}
r_i=-\log\left(\vect{\pi}(\bfx_i;\bfalp)\exp \left(\int_0^{y_i} \lambda (s;\, \vect{\theta})ds\ \mat{T} \right)\vect{e}\right)\,,\quad i=1,\dots, N \,,
\end{align*}
which under right-censoring completely at random and assuming (essentially, specifying the null-hypothesis) that the true distribution is indeed such PH-MoE, then by plugging in the estimated parameters from the EM algorithm, we obtain a dataset
\begin{align*}
\{(r_1\,\delta_1),\:(r_2,\,\delta_2),\dots, (r_N,\,\delta_N)\} \,,
\end{align*}
which follows a right-censored mean one exponential distribution. Here, $\delta_i$, $i = 1,\dots, N$, denote censoring indicators. In turn, we may construct a Kaplan-Meier survival curve for this dataset,
$ {S}(r)$, which should roughly resemble $S_0(r)=\exp(-r)$. To obtain a confidence band, we may use Greenwood's formula \
${\operatorname{Var}}({S}(r)) = {S}(r)^{2} \sum_{i: r_{i} \leq r} {d_{i}}/({n_{i}\left(n_{i}-d_{i}\right))}\,,$ where $d_i$ is the number of tied values at $r_i$, and $n_i$ all values yet to be observed (or at risk).




\section{Transforms}\label{sec:transforms}

The shape of the intensity function $\lambda$ is a central assumption of the PH-MoE model, which in particular determines the tail behavior, as can be deduced from \eqref{eq:IPHasymptotic}. This section introduces two useful global parametrizations for heavy-tailed distributions, and subsequently considers semi-composite models, which combine the conceptual approaches of splicing with our current setting.

Before introducing the parametric forms, we provide the exact tail behavior, which follows immediately from \eqref{eq:IPHasymptotic}.

\begin{proposition}\label{tails_moes}
Let $Y|\bfX$ be a PH-MoE specification. Let $\bfT({\bfX})$ be the sub-intensity matrix associated with the Markov jump-process $(J_{t})_{t\ge0}$ restricted to the accessible states $A(\bfX)\subset \{1,\dots,p\}$ when starting according to the distribution $\bfp(\bfX)$. Then
\begin{align*}
	\ov F_{Y|\bfX}(y|\vect{x}) \sim c(\vect{x}) [g^{-1}(y)]^{m(\vect{x}) -1} \exp({-\eta(\vect{x}) g^{-1}(y)}) \,, \quad y \to \infty \,,
\end{align*}
where $c(\vect{x})$ is a positive constant depending on $\bfp(\vect{x})$ and $\bfT(\vect{x})$, $-\eta(\vect{x})$ is the largest real eigenvalue of $\bfT(\vect{x})$, and $m(\vect{x})$ is the size of the Jordan block associated with $\eta(\vect{x})$. 

In particular, if $\bfp(\vect{x})$ never has zeros or if all states of the Markov process communicate, then $\bfT(\vect{x})=\bfT,$ for all $\vect{x}$, and all subgroups of the population have the same tail parameters.
\end{proposition}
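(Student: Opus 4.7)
The plan is to condition on $\bfX = \vect{x}$, reducing the problem to an ordinary IPH tail asymptotic for which the already-stated formula \eqref{eq:IPHasymptotic} supplies the answer. By definition of the PH-MoE specification, $Y\mid\bfX=\vect{x}\sim \mbox{IPH}(\bfp(\vect{x}),\bfT,\lambda)$, so the only subtlety is that $\bfp(\vect{x})$ may have zero entries, and therefore the effective state space over which the underlying Markov jump process actually evolves can be a proper subset of $\{1,\dots,p\}$.

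Concretely, I would first isolate the subset $A(\vect{x})\subseteq\{1,\dots,p\}$ of states reachable, in finitely many jumps of $(J_t)_{t\ge 0}$, from the support of $\bfp(\vect{x})$, and then argue that the conditional law admits an equivalent IPH representation on $A(\vect{x})$. The restricted matrix $\bfT(\vect{x})$ obtained by taking the rows and columns of $\bfT$ indexed by $A(\vect{x})$ is a bona fide sub-intensity matrix, because states outside $A(\vect{x})$ carry no initial mass and cannot be visited subsequently, so they contribute nothing to the density or survival function; the corresponding entries of $\bft$ supply the exit rates of the reduced representation. With this reduction in place, I would directly invoke \eqref{eq:IPHasymptotic} on the reduced triple to obtain the claimed form, reading off $-\eta(\vect{x})$ as the largest real eigenvalue of $\bfT(\vect{x})$, $m(\vect{x})$ as the associated Jordan block size, and $c(\vect{x})>0$ as the leading constant coming from the Jordan decomposition applied to the restricted initial vector together with $\bfT(\vect{x})$.

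For the ``in particular'' statement, both sufficient conditions force $A(\vect{x})=\{1,\dots,p\}$ for every $\vect{x}$. If $\bfp(\vect{x})$ has no zero entries, every transient state is itself a starting state; if all transient states of the Markov process communicate, then from any state with positive initial probability every other state is reachable in finitely many jumps. In either case $\bfT(\vect{x})=\bfT$, so the eigenstructure is $\vect{x}$-independent, and only the leading constant $c(\vect{x})$ varies across the population.

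The main (and essentially only) obstacle is the bookkeeping around accessible states: one must be careful that, when $\bfp(\vect{x})$ contains zeros, naively applying \eqref{eq:IPHasymptotic} to the full triple $(\bfp(\vect{x}),\bfT,\lambda)$ could formally pick up eigenvalues of $\bfT$ associated to states that are never excited by the initial vector, whereas the correct asymptotic is governed exclusively by the spectrum of $\bfT(\vect{x})$. Once this reduction is justified, the proof amounts to a direct citation of \eqref{eq:IPHasymptotic}.
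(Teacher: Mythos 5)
Your proposal is correct and follows the same route as the paper, which states that the result ``follows immediately from \eqref{eq:IPHasymptotic}'' after restricting to the accessible states; your explicit bookkeeping of $A(\vect{x})$ and the block-reduction of $\bfT$ is precisely the reduction the paper has in mind, and your handling of the ``in particular'' clause matches as well.
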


\subsection{Global models}

Global models in the context of PH-MoE refer to parametrizations of $\lambda$ with respect to the same function on all of $\mathbb{R}_+$, as opposed to piece-wise functions. Such specifications are natural when considering the interpretation of the $g^{-1}$ function: it serves as a time transform that changes throughout time in a smooth way, that is, with continuous derivatives.

\subsubsection*{Pareto PH-MoE}
	Consider the transformation $$Y|\bfX= \theta (\exp({Z|\bfX})-1)\,,$$ where $Z|\bfX \sim \mbox{PH}(\bfp(\bfX) , \bfT )$ and $\theta > 0$. 
	Then, for $y\ge0$, 
	\begin{gather*}
		\ov{F}_{Y|\bfX}(y| \bfx ) = \bfpi(\bfx) \left(\dfrac{y}{\theta}+1\right)^{\bfT} \bfe \,,\\
		f_{Y|\bfX}(y| \bfx ) = \bfpi(\bfx) \left( \dfrac{y}{\theta}+1\right)^{\bfT - \mat{I}} \bft \, \frac{1}{\theta} \,.
	\end{gather*} 
	Here, $g(y)=\theta \left( \exp(y)-1\right)$ and $g^{-1}(y) = \log \left( {y}/{\theta}+1  \right)$. Consequently, the intensity function is given by 
\[ \lambda (y)  = \frac{1}{y+\theta}  \,.  \]

We refer to $Y|\bfX$ as a Pareto PH-MoE. It then follows from Proposition \ref{tails_moes} that $$\ov{F}_{Y|\bfX}(y| \bfx )\sim L(y,\bfx) y^{-\eta(\bfx)},$$ as $y \to \infty$, where $L(\cdot,\bfx)$ is a slowly varying function, that is, it satisfies that $\lim_{y \to \infty}L( c y,\bfx) / L( y,\bfx) = 1$ for all $c>0$, and $-\eta(\bfx)$ is the largest real eigenvalue of $\bfT(\bfx)$. The Pareto MoE  is designed to capture heavy-tailed (in the sense of regular variation) distributions with additional flexibility in the body of the distribution arising from the matrix parameters.

\subsubsection*{Weibull PH-MoE}
	If we now instead consider $$Y|\bfX= {(Z|\bfX)}^{1/\theta},$$ where $Z|\bfX \sim \mbox{PH}(\bfp(\bfX) , \bfT )$ and $\theta > 0$, then for $y\ge0$,
	\begin{gather*}
		\ov{F}_{Y|\bfX}(y| \bfx ) = \bfpi(\bfx) \exp({\bfT y^{\theta}}) \bfe \,, \\
		f_{Y|\bfX}(y| \bfx ) = \bfpi(\bfx) \exp({\bfT y^{\theta}}) \bft \, \theta y^{\theta-1} \,.
	\end{gather*} 
Hence, $g(y)=y^{1/\theta}$, $g^{-1}(y)=y^\theta $, and 
\[ \lambda (y) = \theta y^{\theta-1} . \]
We refer to this model as a Weibull PH-MoE, where loosely speaking we obtain, for each observation, a Weibull tail behavior with a matrix in place of the usual scale parameter. From  Proposition \ref{tails_moes}, it follows that 
\begin{align}\label{tail_weibullmoe}
\ov{F}_{Y|\bfX}(y| \bfx ) \sim c(\bfx) y^{\gamma(\bfx)} \exp({-\eta(\bfx) y^{\theta}}),
\end{align} as $y \to \infty$, where $c(\bfx) > 0$, $\gamma(\bfx) \geq 0$, and $-\eta(\bfx)$ is as above. 

This model is suitable for a wider range of applications, since it falls into the Gumbel max-domain of attraction, which implies that it has strictly lighter tails than those of Pareto-type. However, for $\theta <1$ (respectively, $\theta>1$), we get that \eqref{tail_weibullmoe} specifies heavier (respectively, lighter) tails than exponentially decaying ones.

An interesting feature of this specification, and contrary to the Pareto case, is that conditional means are fully explicit and given by
\begin{align*}
	\E(Y^{\zeta}|\bfX)= \Gamma(1+ \zeta/\theta) \bfpi(\bfX) (- \bfT)^{-\zeta/\theta} \bfe \,\quad \forall \zeta>0\,.
\end{align*}

\begin{example}[Different tail behaviors]\rm
We illustrate the importance of Proposition \ref{tails_moes} by providing a simple two-groups case where different tail behavior arises. Consider the matrix
\begin{align*}
	{\bfT}=\left(\begin{matrix}{}
  -1& 0.5& 0 \\
 1 &-2 & 0\\
 0 &0 & -3 \\
\end{matrix}\right),
\end{align*}
and two groups with initial distributions
 \begin{align*}
 \bfp(\mbox{Group 1})&= (1,\: 0,\: 0),\\
  \bfp(\mbox{Group 2})&= (0,\: 0,\: 1),
 \end{align*}
 respectively. Then the first group can only access the first two states, and thus its tail is of the order $\color{black}\exp(-0.634 y)$ (since $-0.634$ is the largest eigenvalue of the sub-matrix $\color{black}(t_{kl})_{k,l=1,2}$), while the second group can only access the third state, and thus has a tail  of order $\color{black}\exp(-3y)$.
 
Similarly, if a Pareto inhomogeneity function is used, the tail index will vary between the two groups. Thus, after estimation, it is important to check which states are accessible by which sub-populations, in order to deduce their precise conditional tail asymptotics.
\end{example}

\subsection{Semi-composite models as an alternative to splicing}
When heavy tails are present, a standard approach to obtain a global model for claim severities is to model the body and tail separately, and then combine them through splicing or mixing. For describing the tail of the distribution, extreme value tools are typically employed. Although this two-step procedure is not fully satisfactory, the outcome can be more reliable when the parameter of interest is the tail coefficient, see, for instance, \cite{embrechts2013modelling}. 

On the other hand, the models presented in the previous section are attractive alternatives to obtain global models, due to their authentic heavy tails and denseness. Moreover, the fitting of these models does not require any form of threshold selection, as in traditional extreme value techniques. 
 However, their estimation methods give the same weight to all data points, and hence the automatic modeling of the tails may not be as satisfactory as when targeting the tail via thresholding. Furthermore, in some situations, even if the tail is correctly specified via fitting a PH-MoE model using the EM algorithm, a risk manager might be interested in at least partially separating the analysis above and below a certain threshold. 
 
 Below we see how certain piecewise specifications for the inhomogeneity function $\lambda$ can achieve a compromise between the two above approaches, while still formally falling into the class of standard PH-MoE models.
Specifically, we consider inhomogeneity transformations which are defined differently below and above a certain threshold (and the idea can be extended to several layers).

\begin{definition}
We say that a PH-MoE model is semi-composite if its intensity function is of the form 
\begin{align*}
	\lambda(t) = \left\{\begin{matrix}{}
   \lambda_1(t) \,, & t \leq y_0 \,,\\
   \lambda_2(t)\,, &  t > y_0\,,
\end{matrix} \right.
\end{align*}
for any two intensities $\lambda_1,\lambda_2$. 
\end{definition}
An immediate consequence is the following:
\begin{proposition}
For a semi-composite PH-MoE model we have that
\begin{align*}
	g^{-1}(y) = \left\{\begin{matrix}{}
   g^{-1}_1(y)\,, & y \leq y_0 \,,\\
   g^{-1}_2(y)+g^{-1}_1(y_0) -g^{-1}_2(y_0)\,, &  y > y_0 \,,
\end{matrix} \right.
\end{align*}
and so in particular
\begin{align*}
		\ov{F}_{Y|\bfX}(y| \bfx ) =  \left\{\begin{matrix}{}
   \bfpi(\bfx) \exp(\bfT g_1^{-1}(y) )\bfe  \,, & y \leq y_0 \,,\\
   \bfpi(\bfx) \exp( (g^{-1}_2(y)+g^{-1}_1(y_0) -g^{-1}_2(y_0)) \bfT)\bfe \,,&  y > y_0\,.
\end{matrix} \right.
\end{align*}
Hence, $Y|\bfX$ is tail-equivalent to a PH-MoE model with intensity $\lambda_2(t)$ for all $t\ge0$.
\end{proposition}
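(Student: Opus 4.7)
The plan is to unwind the definition of $g^{-1}$ as the antiderivative of $\lambda$ and exploit the piecewise structure of $\lambda$, then plug into the IPH survival function formula stated in the preliminaries.

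First I would recall that for any IPH law we have $g^{-1}(y) = \int_0^y \lambda(s)\,ds$. In the semi-composite case, splitting the domain of integration at $y_0$ gives immediately $g^{-1}(y) = \int_0^y \lambda_1(s)\,ds = g_1^{-1}(y)$ for $y \le y_0$, and for $y > y_0$,
\begin{align*}
g^{-1}(y) = \int_0^{y_0}\lambda_1(s)\,ds + \int_{y_0}^y \lambda_2(s)\,ds = g_1^{-1}(y_0) + \bigl(g_2^{-1}(y)-g_2^{-1}(y_0)\bigr),
\end{align*}
which is exactly the claimed formula. The survival-function identity then follows by substituting into $\ov F_{Y|\bfX}(y|\bfx) = \bfpi(\bfx)\exp(g^{-1}(y)\bfT)\bfe$ from the IPH preliminaries.

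For the tail-equivalence assertion, I would set the constant $C := g_1^{-1}(y_0) - g_2^{-1}(y_0)$ and note that for $y>y_0$,
\begin{align*}
\ov F_{Y|\bfX}(y|\bfx) = \bfpi(\bfx)\exp(C\bfT)\exp(g_2^{-1}(y)\bfT)\bfe.
\end{align*}
Using $\lambda_2$ as the global intensity gives a PH-MoE with survival $\bfpi(\bfx)\exp(g_2^{-1}(y)\bfT)\bfe$, so the ratio of the two tails is a bounded positive quantity (to make this precise, apply the asymptotic expansion \eqref{eq:IPHasymptotic} to both, noting that $g_2^{-1}(y)\to\infty$ by \eqref{lambda_restrictions}, so $[g_2^{-1}(y)+C]^{m-1} \sim [g_2^{-1}(y)]^{m-1}$ and the exponential contributes only a multiplicative constant $\exp(-\eta C)$). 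This yields tail equivalence in the usual sense that the ratio converges to a positive constant as $y\to\infty$.

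The proof is essentially mechanical; the only mild subtlety is making the tail-equivalence statement rigorous, which is handled by invoking \eqref{eq:IPHasymptotic} rather than trying to manipulate the matrix exponentials directly (the matrix $\exp(C\bfT)$ does not in general commute nicely with $\bfpi(\bfx)$ in a way that recovers a clean PH-MoE representation, but the asymptotic factorisation does the job). No deeper obstacle arises.
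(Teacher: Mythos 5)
Your proof is correct and is exactly the computation the paper has in mind: the paper presents this proposition as "an immediate consequence" of the definition and gives no written proof, and your splitting of $\int_0^y\lambda(s)\,ds$ at $y_0$ followed by substitution into the IPH survival formula is the intended argument. Your handling of the tail-equivalence claim via the asymptotic expansion \eqref{eq:IPHasymptotic} (noting that the additive constant $C=g_1^{-1}(y_0)-g_2^{-1}(y_0)$ only contributes the multiplicative factor $\exp(-\eta C)$ in the limit) is a sound way to make rigorous what the paper leaves implicit.
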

Below we outline the details of two cases which give rise to tails which are commonly used for loss modeling. 

\begin{example}[PH body with Weibull tail]\normalfont
	Specify 
	\begin{align*}
	\lambda(t) = \left\{\begin{matrix}{}
   1 \,, & t \leq y_0 \,,\\
   \theta (t - y_0) ^{\theta -1}\,, &  t > y_0\,.
\end{matrix} \right.
\end{align*}
In this way 
\begin{align*}
	g^{-1}(y) = \left\{\begin{matrix}{}
   y\,, & y \leq y_0 \,,\\
   y_0 + (y - y_0) ^{\theta}\,, &  y > y_0 \,,
\end{matrix} \right.
\end{align*}
and 
\begin{align*}
	g(y) = \left\{\begin{matrix}{}
   y\,, & y \leq y_0 \,,\\
   y_0 + (y - y_0) ^{1/\theta} \,, &  y > y_0\,.
\end{matrix} \right.
\end{align*}
Hence
\begin{align*}
		\ov{F}_{Y|\bfX}(y| \bfx ) =  \left\{\begin{matrix}{}
   \bfpi(\bfx) \exp(\bfT y )\bfe  \,, & y \leq y_0 \,,\\
   \bfpi(\bfx) \exp( (y_0 + (y - y_0) ^{\theta}) \bfT)\bfe \,,&  y > y_0\,.
\end{matrix} \right.
\end{align*}
\end{example}

\begin{example}[PH body with Pareto tail]\normalfont
	We may instead specify 
	\begin{align*}
	\lambda(t) = \left\{\begin{matrix}{}
   1 \,,& t \leq y_0 \,,\\
    (t - y_0 + \theta) ^{-1} \,, &  t > y_0 \,.
\end{matrix} \right.
\end{align*}
Which now yields
\begin{align*}
	g^{-1}(y) = \left\{\begin{matrix}{}
   y \,, & y \leq y_0 \,,\\
   y_0 + \log((y - y_0) / \theta + 1)  \,, &  y > y_0\,,
\end{matrix} \right.
\end{align*}
and 
\begin{align*}
	g(y) = \left\{\begin{matrix}{}
   y \,,& y \leq y_0 \,,\\
   y_0 + \theta( \exp(y - y_0) - 1) \,, &  y > y_0 \,.
\end{matrix} \right.
\end{align*}
Hence
\begin{align*}
		\ov{F}_{Y|\bfX}(y| \bfx ) =  \left\{\begin{matrix}{}
   \bfpi(\bfx) \exp(\bfT y )\bfe  \,, & y \leq y_0 \,,\\
   \bfpi(\bfx) \exp( y_0 \bfT) \left( \frac{y - y_0}{\theta} + 1\right)^{\bfT} \bfe \,, &  y > y_0 \,.
\end{matrix} \right.
\end{align*}
\end{example}

Note that for fix $\bfx$, the proposed models are dense in the class of distributions in the positive real line. This follows from the fact that they belong to the IPH class, which possesses the said property for any $\lambda$ satisfying \eqref{lambda_restrictions}.

Given that the intensity function $\lambda$ is a parametric function depending on the parameters $\theta$ and $y_0$, we can employ Algorithm \ref{alg:IPHMoE} for the estimation of the above semi-composite specifications. 
However, the changepoint may alternatively be specified in advance and then fixed through the fitting procedure.
The latter approach is preferable in almost all cases, and in particular when working with regularly-varying heavy tails, where the threshold may be determined by well-founded visual tools, such as the Hill estimator, cf. \cite{hill1975simple}. 

%
%
%




\section{Numerical examples}\label{sec:examples}
This section illustrates the statistical feasibility of the methods developed above. We do not aim to be comprehensive in our treatment, but instead point out the general direction which seems promising. The drawback of the algorithm at the moment is speed, in particular of the R-step, which is a well-known issue of multinomial regression models when in the presence of several covariates. Hence, we provide one simulated example with a $4$-dimensional categorical covariate, and a much larger real insurance example with two categorical covariates.

\subsection{Synthetic data}
We consider a simulated example where the data genuinely comes from a classical mixture-of-experts model. More precisely, the dataset consists of a total of $2,000$ observations divided into $4$ groups of size $500$, each having  distributions as follows: 
\begin{align*}
&\mbox{Group A: } Y_i\sim \Gamma(\mbox{shape}=1,\mbox{scale}=3),\quad \mbox{Group B: } Y_i\sim \Gamma(\mbox{shape}=3,\mbox{scale}=9),\\
&\mbox{Group C: } Y_i\sim \Gamma(\mbox{shape}=1,\mbox{scale}=9),\quad\mbox{Group D: } Y_i\sim \Gamma(\mbox{shape}=3,\mbox{scale}=3).
\end{align*}
We consider a $5$-dimensional PH structure, which was chosen small enough so that if there were no interaction between states, it would not be possible to model the four groups. Indeed, mixtures of five exponential components would not correctly capture the four given distributions.

Subsequently, we employed a homogeneous version of Algorithm \ref{alg:IPHMoE}, where Steps 2 and 6 are suppressed. In other words, we assume that $\lambda(t)=1$, $\forall t\ge0$, and in particular, the tails of both the data and the model are exponentially decaying. The results are as follows\footnote{Here and in the rest of the numerical section, estimates are rounded to three decimal places. This means that some displayed null values may actually be very small but non-zero.}:
\begin{align*}
	\hat{\bfT}=\left(\begin{matrix}{}
  -0.349& 0& 0 &0&0\\
 0.303 &-0.303 & 0 &0&0\\
 0 &0.162 & -0.553&0&0.391 \\
  0& 0&0.059 &-0.06 & 0.001\\
    0& 0.618&0.607 &0 & -1.225
\end{matrix}\right),
\end{align*}
and the output of the R-step is given in Table \ref{reg:coef}.
\begin{table}[!htbp] \centering 
\begin{tabular}{@{\extracolsep{5pt}}lcccc} 
\hline \\[-1.8ex] 
State & \multicolumn{1}{c}{(Intercept)} & \multicolumn{1}{c}{Group B} & \multicolumn{1}{c}{Group C} & \multicolumn{1}{c}{Group D} \\ 
\hline \\[-1.8ex] 
2 & -9.986 (6.673) & 17.488 (24.118) & 8.732 (6.675)& 13.983 (30.537) \\ 
3 & -4.32 (0.395)& -3.113 (0.044) &  4.308  (0.41)& 17.281 (29.532)\\ 
4 & -12.642 (25.17) & 25.136 (34.211) & 12.002 (25.17) &  8.02  (300.3)\\ 
5 & -4.488 (0.429) & 6 (25.6) &   3.079 (0.464) &  13.74 (29.534)\\ 
\hline \\[-1.8ex] 
\end{tabular} 
  \caption{Regression coefficients $\hat{\bfalp}$ for the categorical variable of the simulated data. Group A is the baseline level. In parenthesis, the standard errors are displayed. The coefficient associated with state $1$ can be deduced from the constraint $\sum_{k=1}^5 \pi_k(\bfX)=1$.} 
  \label{reg:coef} 
\end{table} 
  
 In particular, the coefficients translate into the following initial Markov probabilities for each group:
 \begin{align*}
 \bfp(\mbox{Group A})&= (0.976,\: 0.000,\: 0.013,\: 0.000,\: 0.011),\\
  \bfp(\mbox{Group B})&= (0.000,\: 0.007,\: 0.000,\: 0.993,\: 0.000 ),\\
 \bfp(\mbox{Group C})&= (0.328,\: 0.094,\: 0.324,\: 0.173,\: 0.080 ),\\
 \bfp(\mbox{Group D})&= (0.000,\: 0.000,\: 0.976,\: 0.000,\: 0.024 ).\\
 \end{align*} 

When considering the means for each group, given by formula \eqref{mean_expression}, we obtain, upon comparing with a Gamma Generalized Linear Model (GLM), the following table:
\begin{table}[!htbp] \centering 
\begin{tabular}{@{\extracolsep{5pt}}lccc} 
\hline \\[-1.8ex] 
Group& \multicolumn{1}{c}{Theoretical}  & \multicolumn{1}{c}{Empirical ($=$GLM)} & \multicolumn{1}{c}{PH-MoE} \\ 
\hline \\[-1.8ex] 
A&3 & 3.005  & 3.021 \\ 
B&27 & 27.212 & 26.347\\ 
C&9 &  9.463 & 10.001 \\ 
D&9 & 9.499 & 9.807 \\
\hline \\[-1.8ex] 
\end{tabular} 
  \caption{Theoretical, observed, and fitted means.} 
  \label{reg:means} 
\end{table} 
  
Since the PH-MoE does not match the empirical means for each group, as is the case for the GLM, we can observe slight discrepancies between the fitted and observed averages per group. However, if we estimate the dispersion coefficient of the GLM with the average deviance, we may compare not only the mean but the entire distribution of both models. Figure \ref{fit_synth} shows the densities for each group for the theoretical and fitted cases, and for the GLM and PH-MoE models. We observe that the risks are better understood if we use the latter model, and consequently, any other measure of performance which is not solely based on the mean will favor the matrix-based method.

\begin{figure}[!htbp]
\centering
\includegraphics[width=0.7\textwidth]{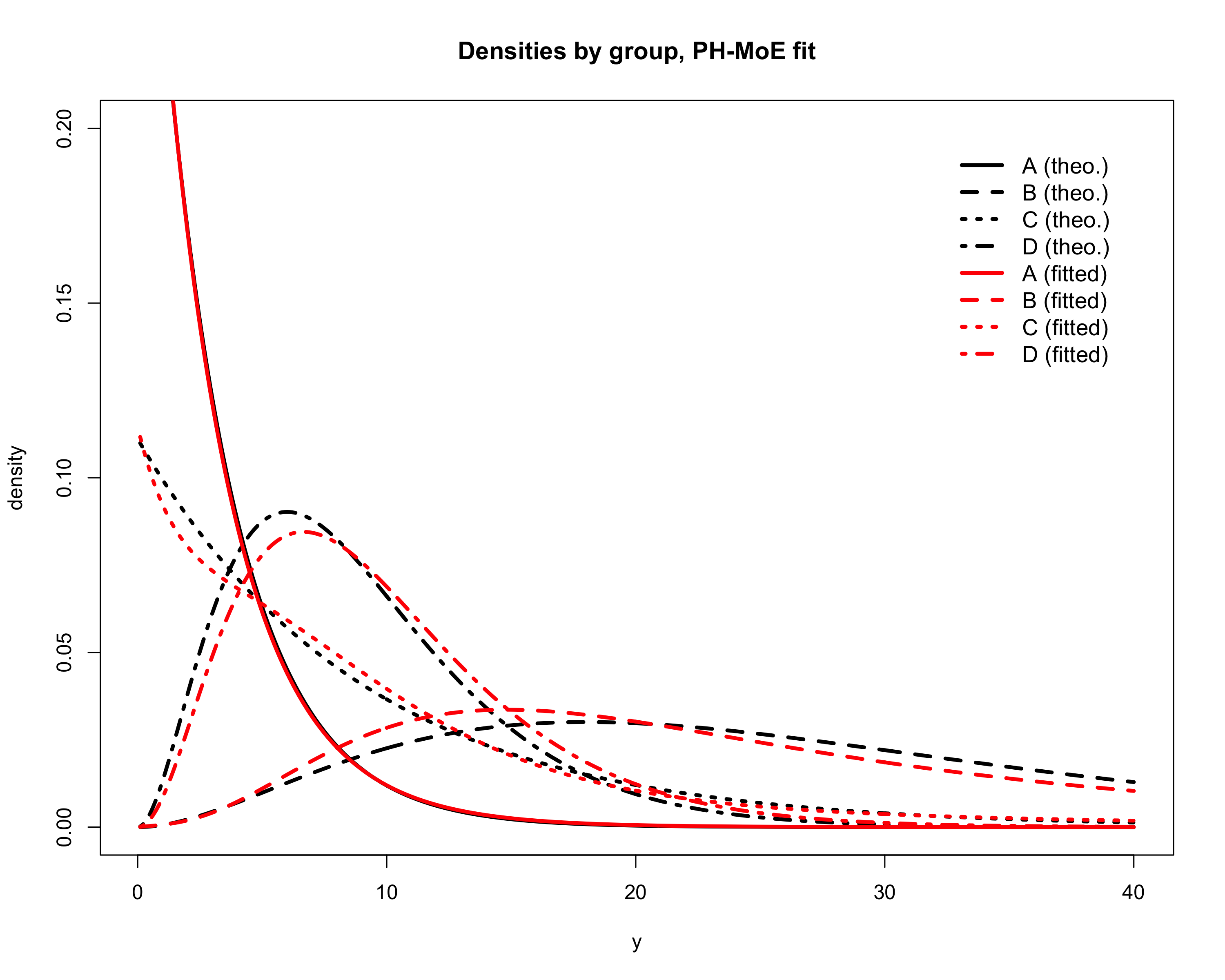}
\includegraphics[width=0.7\textwidth]{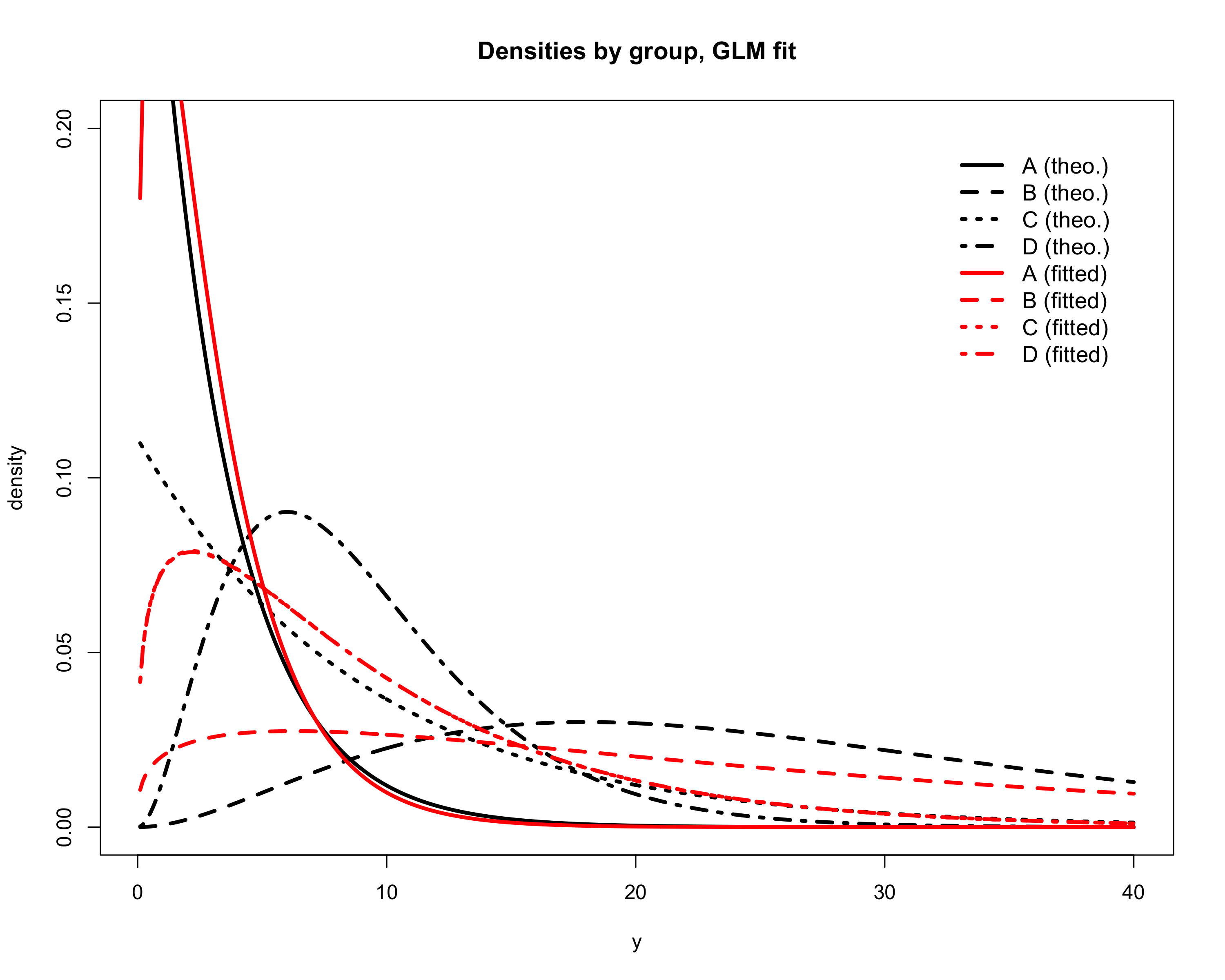}
\caption{Fitted densities for each group of the simulated data, for the PH-MoE (top panel) and GLM (bottom panel) models.
} \label{fit_synth}
\end{figure}

\subsection{Insurance data}

We consider the French Motor Third Party Liability \linebreak (freMTPL) insurance data, contained in the datasets \texttt{freMTPLfreq} and \texttt{freMTPLsev} in the \texttt{CASdatasets} package in \texttt{R}. The data consists of risk features corresponding to $413,169$ motor insurance policies and the number of claims and their severity. Presently we aim at analyzing only a portion of the total $15,390$ claim sizes\footnote{Here, we have divided claim severities by the corresponding claim numbers for each policy. For numerical reasons, we also divided the result by $10^4$.}, mainly for computational power reasons: the multinomial step of the PH-MoE routine can be slow to (or not) converge for large $p$ (say, above $10$), $n$ (in the tens of thousands) and $d$ (more than $20$ covariates). 

To find a sensible and interesting subset of the data, we first observe the top left panel of Figure \ref{hist_full_data}, where we see that the data has a highly pronounced peak in the log scale. Such peak can only be captured by a PH distribution of a huge order ($p>200$), which makes it unfeasible to fit even without covariates. Thus, we consider only the excesses above the threshold $M=0.15$, which in insurance terms would correspond to data entering an XL reinsurance contract with retention level $M$. 
The excesses are plotted in the top right panel of Figure \ref{hist_full_data} in the log scale, which are much easier to estimate with a lower dimension. 
On the bottom panel of Figure \ref{hist_full_data}, we observe the heavy-tailed nature of the excesses and that there is a substantial bias away from strict Pareto behavior (the estimator curves for smaller order statistics).
\begin{figure}[!htbp]
\centering
\includegraphics[width=0.49\textwidth]{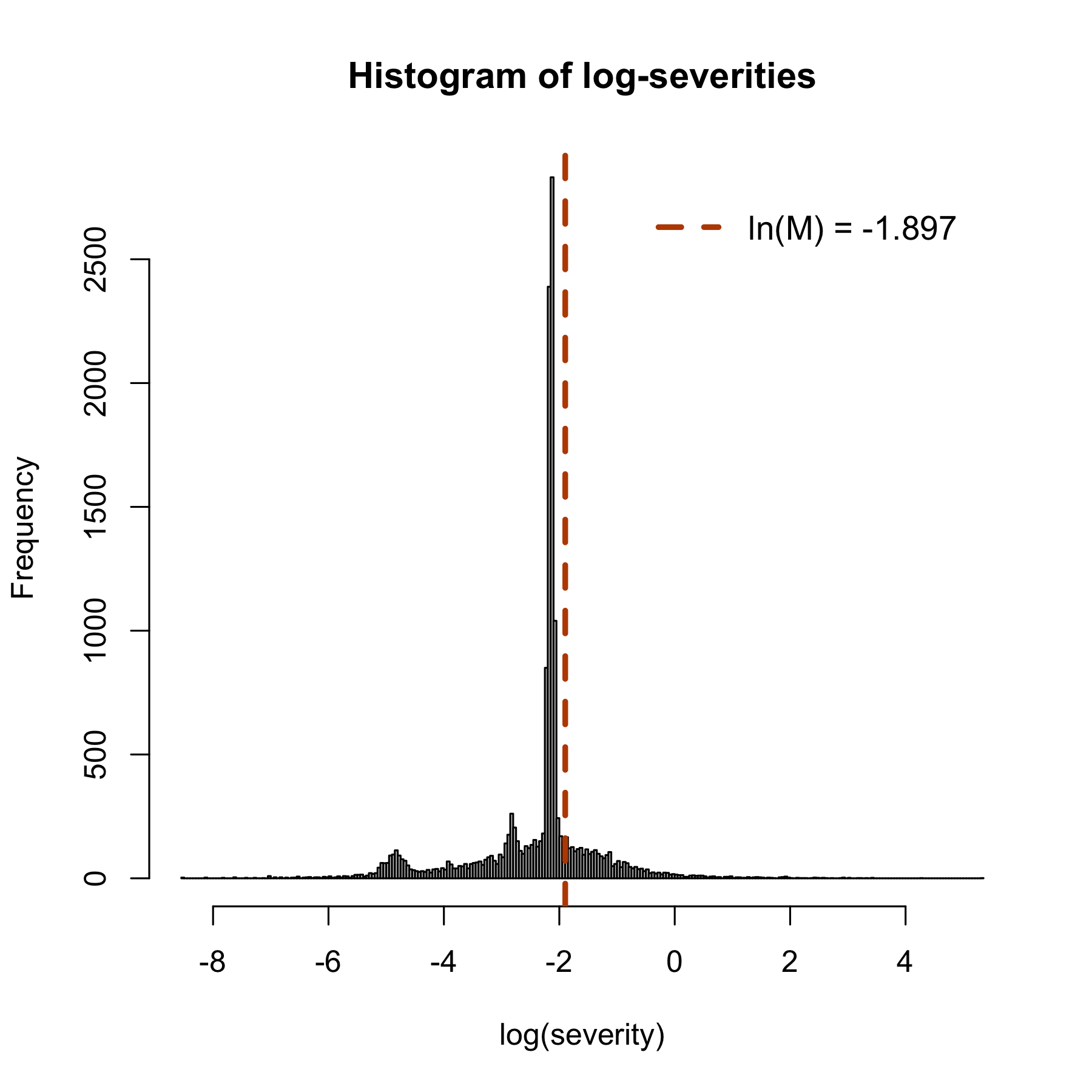}
\includegraphics[width=0.49\textwidth]{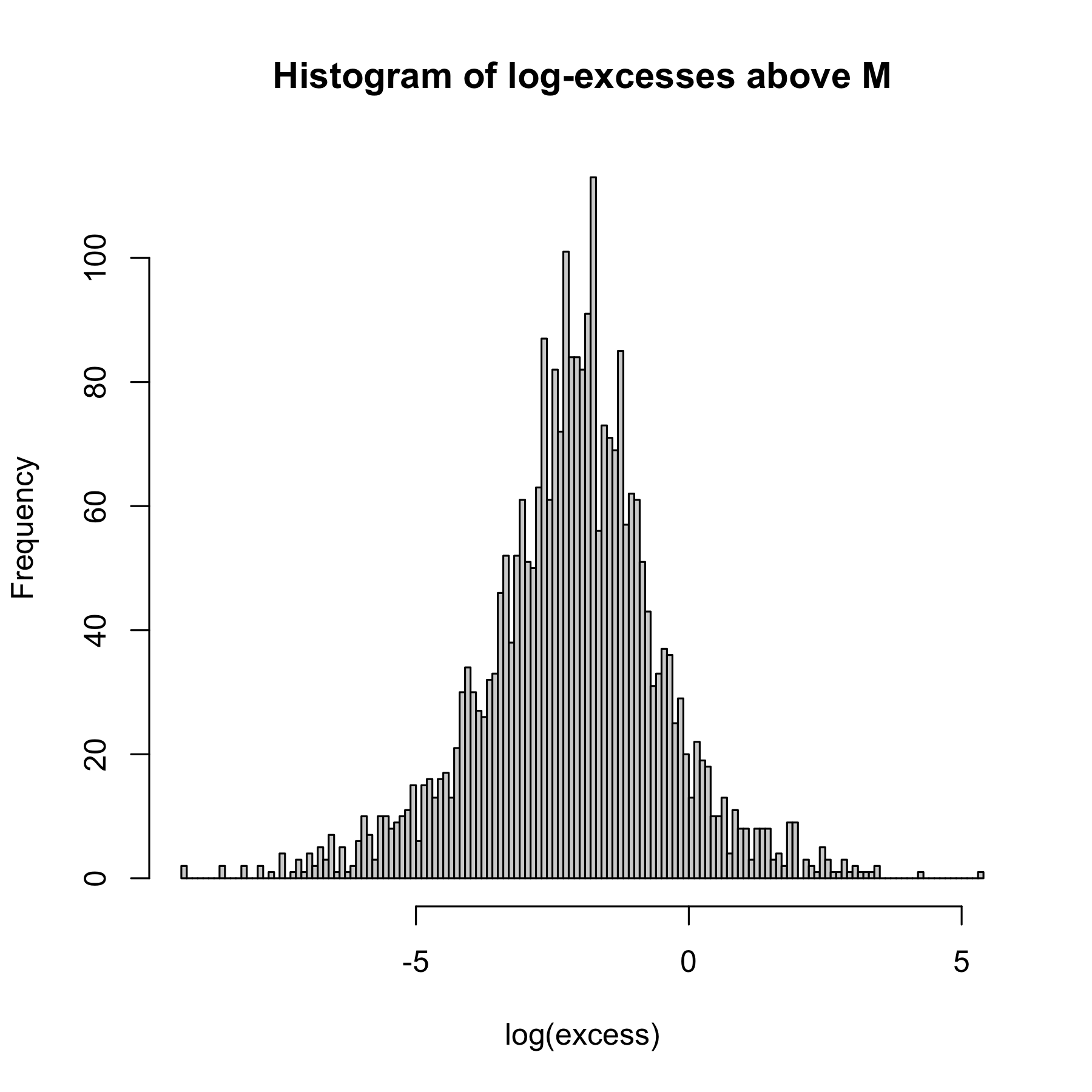}
\includegraphics[width=0.9\textwidth]{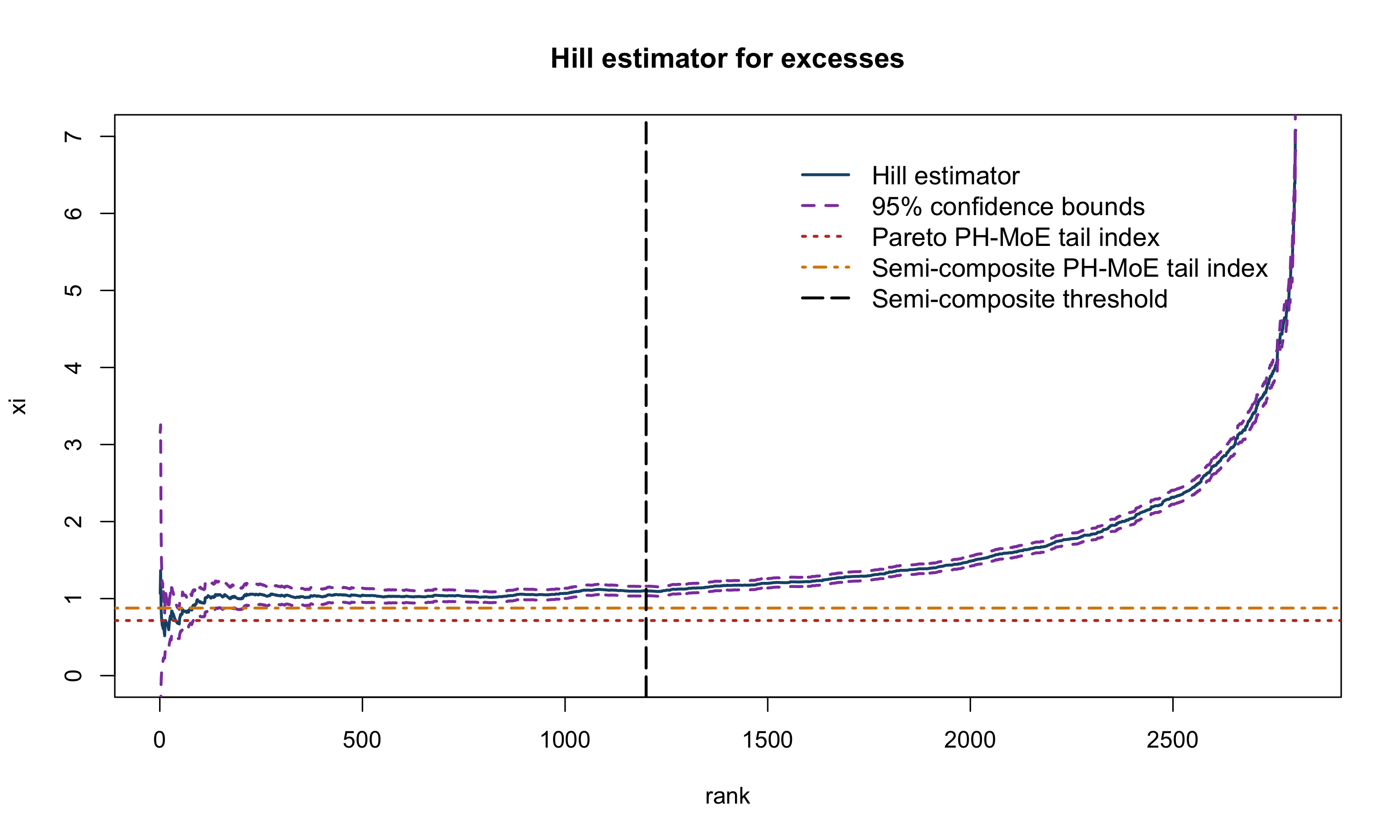}
\caption{French MTPL full data with selected excess threshold (top left panel), the resulting excesses (top right), and their implied tail index according to the Hill estimator (bottom). For the latter plot, we also overlay the implied tail indices from two different PH-MoE fits.
} \label{hist_full_data}
\end{figure}

With respect to covariates, we fitted a log-normal regression model and selected the only two covariates which seem to be relevant to predicting the mean\footnote{In general, insurance covariates provide very small predictive power for severity, in contrast to claim counts, where the performance is usually much better. }: 
\begin{enumerate}
\item \texttt{Power}: The power of the car, an ordered categorical variable with values: d, e, f, g, h, i, j, k, l, m, n,  o.\\
\item \texttt{Region}: The policy region in France, based on the 1970-2015 classification. Possible values associated with the excesses are: Aquitaine,     Basse-Normandie,   
Bretagne,          Centre,            
 Haute-Normandie,   Ile-de-France,   
 Limousin,          Nord-Pas-de-Calais
 Pays-de-la-Loire,   Poitou-Charentes.
\end{enumerate}

{\color{black}

To illustrate and compare the modeling capabilities of our model, we proceed to estimate different PH-MoE and LRMoE models, the latter being a natural candidate for comparison. More specifically, and to keep the number of parameters for both models similar, we considered three Pareto PH-MoE models of dimensions 3, 4, and 5, and three LRMoE models with 4, 5, and 6 experts.
For the PH-MoE models, we employed $1000$ EM steps with random initialization of the parameters. On the other hand, the estimation of the LRMoE models was done using the LRMoE \texttt{R} package (cf. \cite{tseung2020lrmoe}) and as per the Vignettes found in \url{https://github.com/UofTActuarial/LRMoE/tree/master/vignettes} with $200$ CEM steps and expert components automatically chosen with the $\texttt{cmm}$\_$\texttt{init}$ function.
 A summary of the results can be found in Table~\ref{tab:phmoe} and Table~\ref{tab:lrmoe}.

\begin{table}[!htbp]
\centering
\begin{tabular}{l |c c c }

  & \multicolumn{3}{ c  }{PH-MoE}     \\
\hline
 Dimension & $3$   & $4$   & $5$ \\
 \hline 
Log Likelihood & $718.38 $   & $743.52$   & $759.71$   \\
Number of parameters & $52$   & $80$   & $110$   \\
Computational times            & 3.14 mins   & 6.24 mins   & 11.88 mins \\
\hline
\end{tabular}
\caption{Summary for PH-MoE model for the freMTPL dataset.}
\label{tab:phmoe}
\end{table}

\begin{table}[!htbp]
\centering
\begin{tabular}{l |c c c }

 & \multicolumn{3}{ |c  }{LRMoE}    \\
\hline
Number Experts  & $4$   & $5$   & $6$\\
 \hline 
Log Likelihood & $728.64 $   & $746.64 $   & $750.29$   \\
Number of parameters & $71$   & $94$   & $117$   \\
Computational times        & 49.75 mins   & 1.21 hours  & 1.65 hours \\
\hline
\end{tabular}
\caption{Summary for LRMoE model for the freMTPL dataset.}
\label{tab:lrmoe}
\end{table}

Although perhaps mathematically more complex, our investigations show that the numerical routines for PH-MoE models are at least on par with those of LRMoE in terms of likelihood performance, and certainly much faster. Note also that the number of EM steps cannot be compared with those of the CEM algorithm since the latter is much slower but converges in fewer iterations. This is in line with the thinking of the additional parameters of a PH-MoE model as weak learners rather than actual statistical parameters. A full systematic comparison between PH-MoE, LRMoE, TG-LRMoE, and related MoE models in terms of in-sample and out-of-sample performance is out of the scope of this work. However, we can mention that an advantage of having better computational times, is that we can try different initializations for the EM algorithm. This is highly relevant for the estimation of both models since there is always the possibility of obtaining a local maxima depending on the initial values.

It can be appreciated that the number of parameters involved in both models is relatively large.  
 However, it is crucial to understand that these models can be considered as interpretable machine learning methods rather than concise statistical models. In other words, each additional degree of freedom does not always target a particular distributional feature but instead serves as a \textit{weak learner}, working towards an overall good estimation. Hence, classical information criteria such as AIC and BIC will tend to overpenalize these models and should not be used for model selection.
 {\color{black} Ideally, an information criterion specifically designed for PH-MoE models (and even for PH variables) would allow for goodness of fit considerations without using the misspecified AIC and BIC criteria, similar to the development of AICC in the context of time series analysis.} Regularization through cross-validation is {\color{black}also} a natural topic of further study, but out of the scope of the current manuscript.

With the different PH-MoE models above at hand, we now select one to describe our data and present the results. We start by giving some words about the dimension selection of the PH-MoE model, for which we will follow the typical approach of dimension selection for PH distributions. More specifically, given that adding more dimensions always improves the quality of the fit, one typically starts with low dimensions and assesses the benefit/cost of adding extra dimensions. This assessment is usually done using visual aids and/or by looking at changes in the loglikelihood. One aims for a dimension that is a good compromise between the quality of the fit and a reasonable number of parameters. The reason for this approach is the identifiability issues of PH distribution, meaning that the number of free parameters is unknown. However, it is worth mentioning that recent steps towards more statistical-based selection approaches have been recently introduced in the literature. For instance, we can mention the work in \cite{penalised}.  

In our particular case of study, dimension 5 seems to be a good compromise. To support our choice, we also fitted a PH-MoE model of dimension 6, obtaining an increase in the loglikelihood of $8.15$, and additional 32 parameters, which was a much smaller likelihood increase than the previous steps. Hence, we decided to stick to a PH-MoE of dimension 5. 
For completeness, we also fitted a semi-composite PH-MoE with Pareto tail and same dimension 5. In this case, we select the threshold value $y_0$ to be at the order statistic number $1200$, which is when the Hill plot visually starts to flatten, obtaining a loglikelihood of $759.3$. It is worth mentioning that we are selecting the threshold but not the implied tail index. The latter is estimated during the EM algorithm jointly with all other parameters. 

The full fitted coefficients $\hat{\bfalp}$ and their statistical significance are given in Appendix \ref{apA}, with the asymptotic properties of the MLE delegated to Appendix \ref{sec:inference}. In Figure \ref{two_densities}, we observe that the densities shift their shapes when varying the covariates, as expected. The semi-composite model has a discontinuity at $y_0$, which is slightly visible, a feature that is also common in fully composite models. The associated PH-MoE probabilities for these two densities are as follows:
 \begin{align*}
 \mbox{Pareto PH-MoE: } &\bfp(\mbox{Power f, Region Centre})= (0.000,\:  0.000,\:  0.000,\:  0.138,\:  0.862 ),\\
 &\bfp(\mbox{Power g, Region Centre})= (0.074,\: 0.014,\: 0.695,\: 0.132,\: 0.085),\\
&\hat{\bfT}=\left(\begin{matrix}{}
-22.119  & 0.000 & 0.005 & 3.580  & 0.041\\
0.011 & -9.233 & 6.689 & 0 &  2.511 \\
0 & 0.292 & -9.931 & 0 &  0.518\\
0.022 & 1.285 & 0.064 & -1.402 &  0.026\\
0.005 & 0.404 & 0.986 & 0 & -13.203\\
\end{matrix}\right),\\
&\hat{\theta}=1.639.
  \end{align*} 
  \begin{align*}
 \mbox{S-C PH-MoE: } &\bfp(\mbox{Power f, Region Centre})= (0.000,\: 0.000,\: 0.000,\: 0.153,\: 0.847),\\
 &\bfp(\mbox{Power g, Region Centre})= (0.104,\: 0.023,\: 0.655,\: 0.138,\: 0.079 ).\\
 &\hat{\bfT}=\left(\begin{matrix}{}
-14.444 & 0.000 & 0.008 & 2.639 & 0.053\\
0.007 & -5.734 & 4.569 & 0 & 1.146 \\
0 & 0.086 & -5.785 & 0 & 0.479\\
 0.006 & 1.103 & 0.024 & -1.142 &  0.008\\
0.003 & 0.148 & 1.107 & 0 & -8.351\\
\end{matrix}\right),\\
&\hat{\theta}=0.961 .
 \end{align*} 

}

\begin{figure}[!htbp]
\centering
\includegraphics[width=0.7\textwidth]{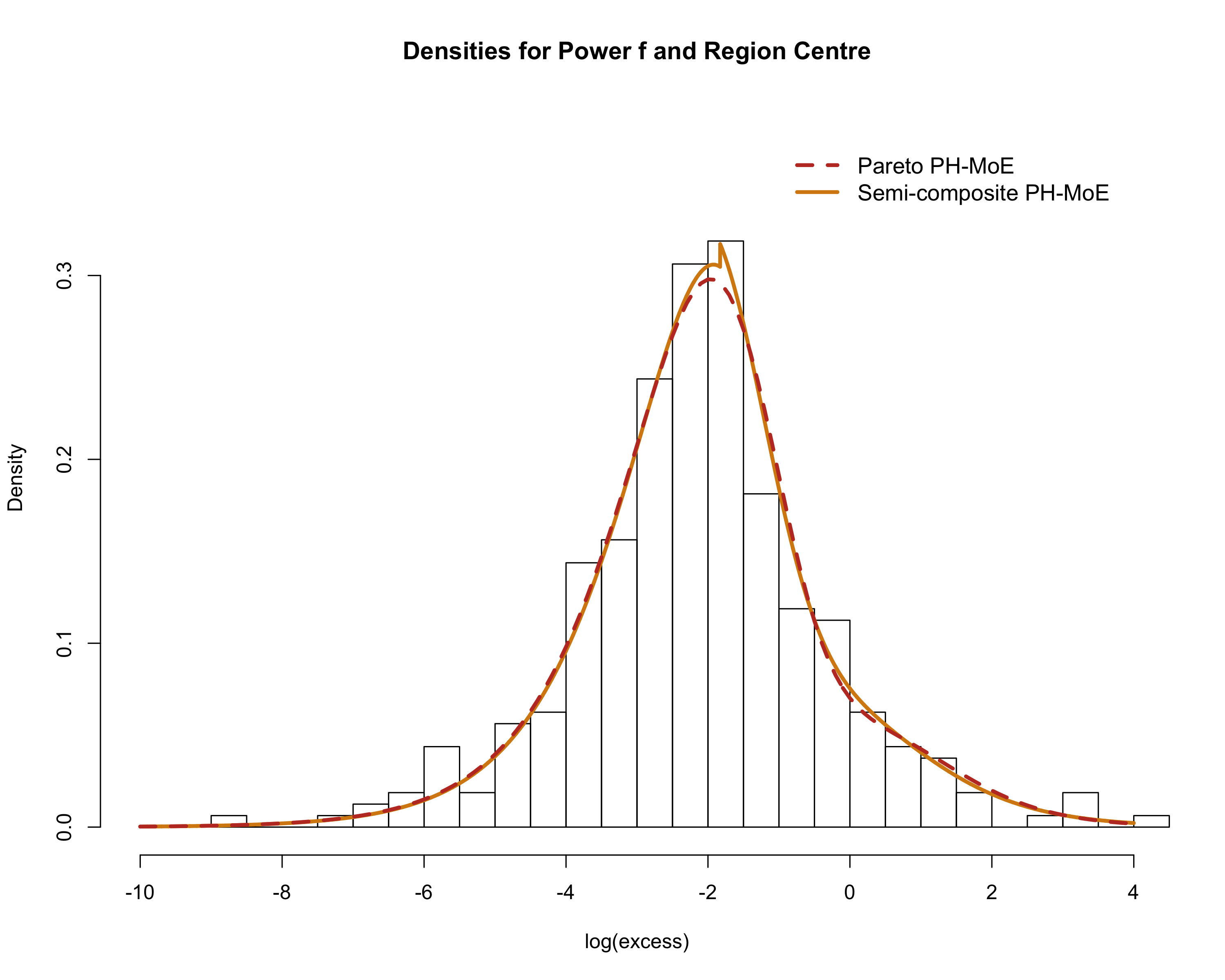}
\includegraphics[width=0.7\textwidth]{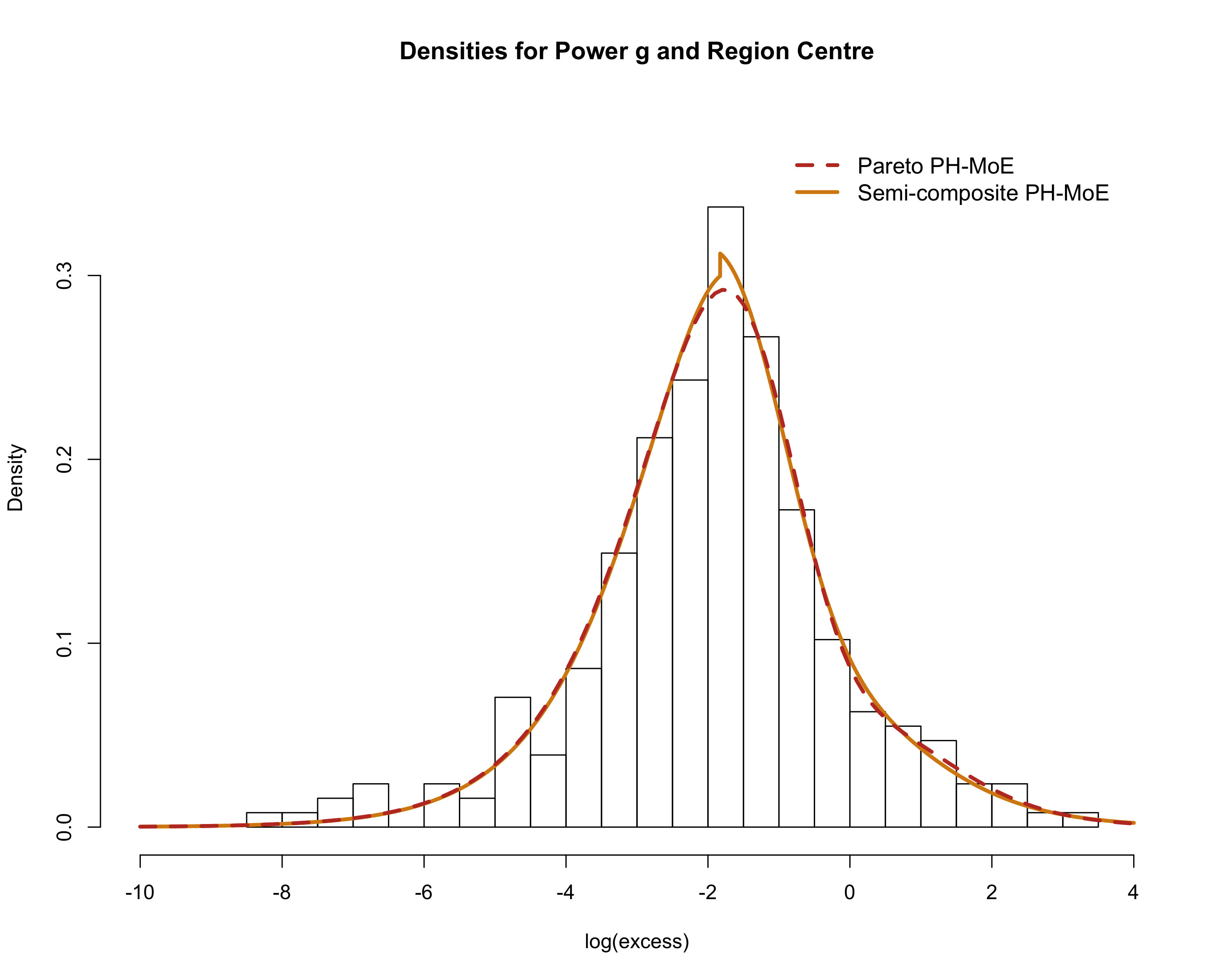}
\caption{Conditional densities for two select covariates.
} \label{two_densities}
\end{figure}

{ \color{black} 
Note that all the five states in both cases communicate, then the resulting tail indices for the PH-MoE models are given by
$$\hat\xi = -1/\max\{\Re\text{Eigen}(\hat\bfT)\} = 0.72,\:0.88,\quad \mbox{respectively},$$}
which, according to the lower panel of Figure \ref{hist_full_data}, are both very reasonable estimates. 
{\color{black}
Further evidence of the quality of the estimation is given in Figure~\ref{residuals}, where we observe that the empirical distribution function of the residuals of PH-MoE models (computed as described in Section~\ref{sec:goodness}) align closely with the distribution function of a standard exponential. This is further supported by applying Kolmogorov-Smirnov tests, for which we obtain a p-value of $0.8717$ for the PH-MoE and $0.8401$ for the semi-composite PH-MoE.
Figure~\ref{intensities} shows how the intensity functions $\lambda$ behave, which may be considered as an infinitesimal ``environment" time change of the underlying phase-type distribution. Another possible extension of our model is to make these transformations dependent on $\bfX$.

\begin{figure}[!htbp]
\centering
\includegraphics[width=0.49\textwidth]{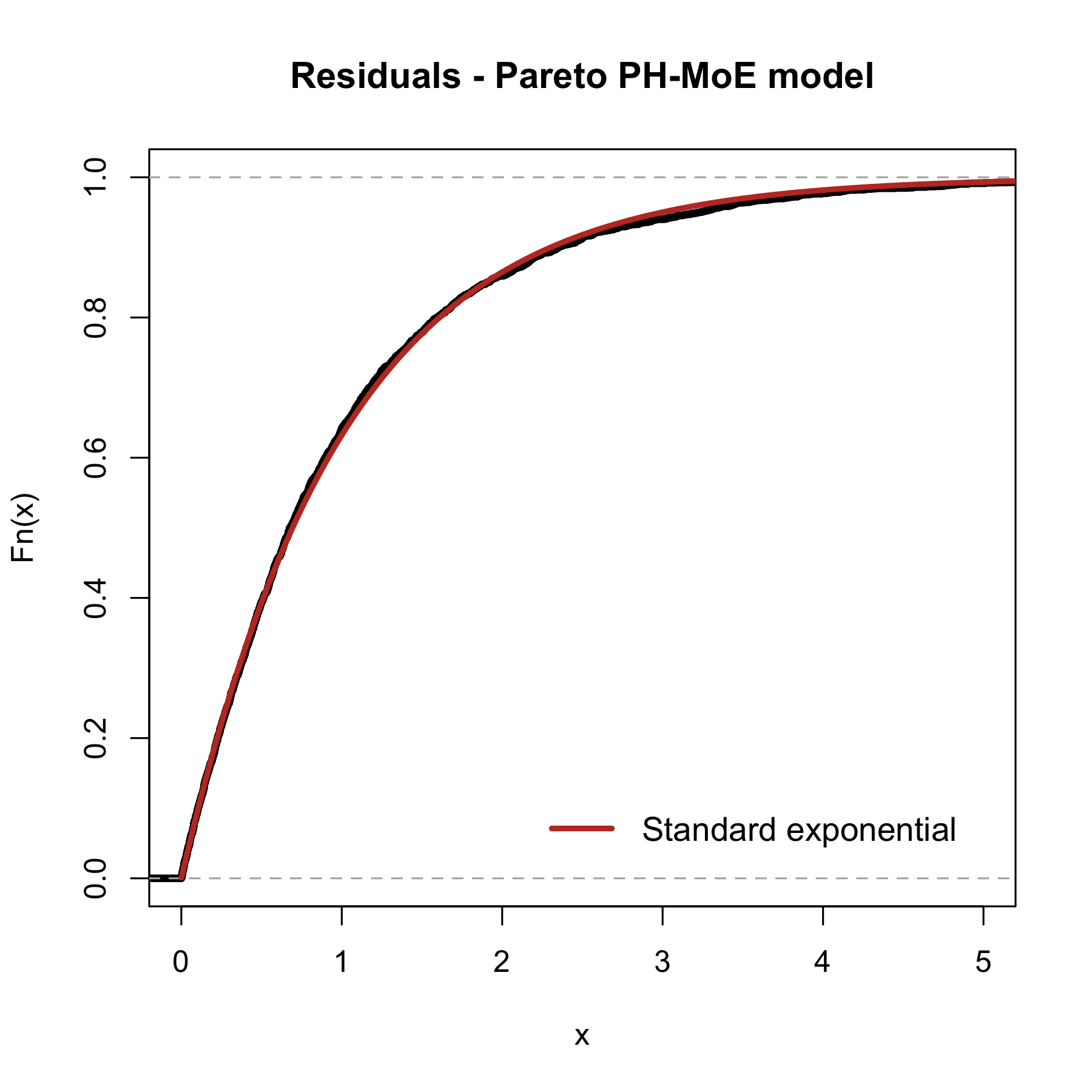}
\includegraphics[width=0.49\textwidth]{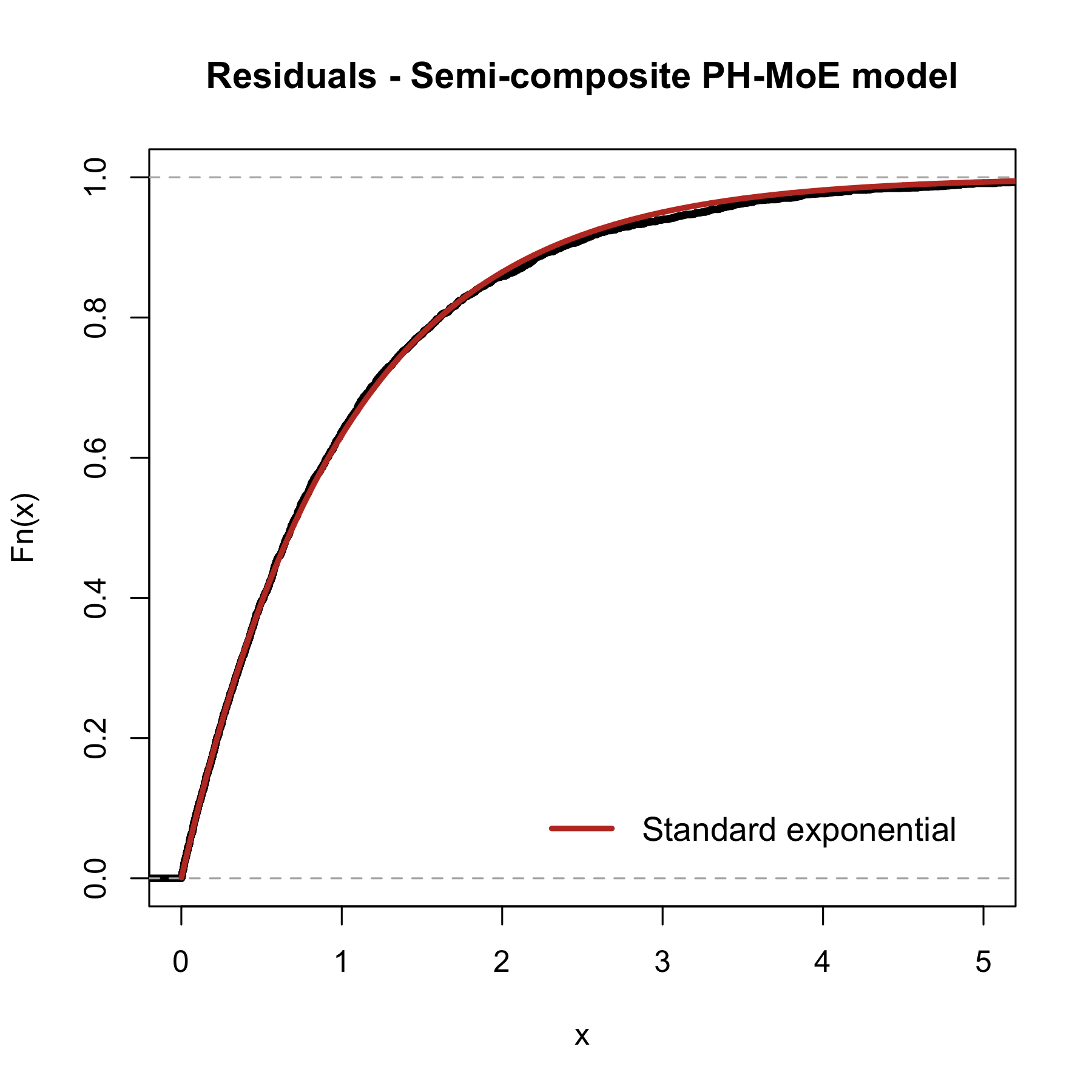}
\caption{CDF for the residuals of the fitted PH-MoE model against CDF of a standard exponential (left panel), and corresponding plot for the semi-composite PH-MoE model (right panel).
} \label{residuals}
\end{figure}

\begin{figure}[!htbp]
\centering
\includegraphics[width=0.7\textwidth]{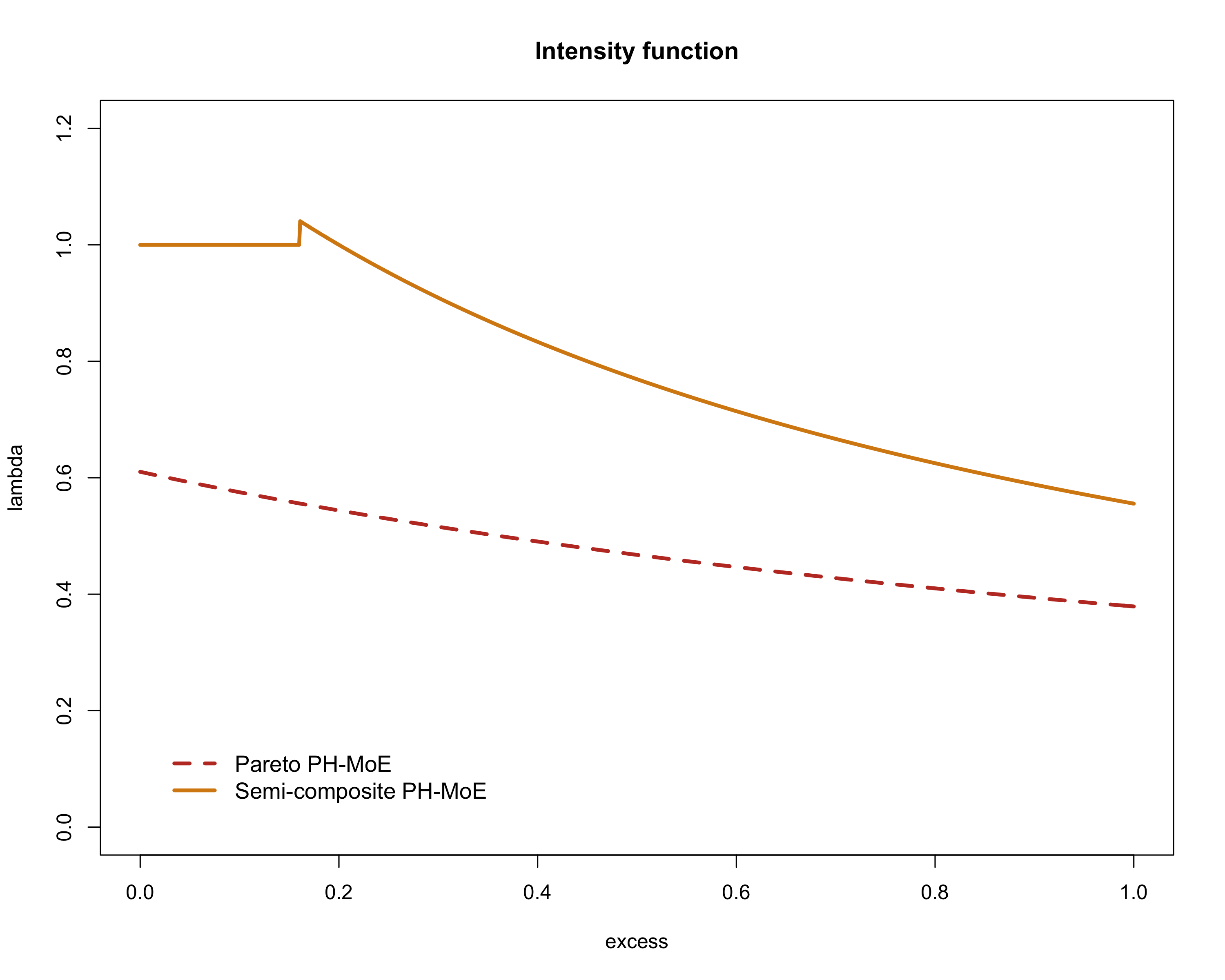}
\caption{Fitted intensity functions for the PH-MoE models.
} \label{intensities}
\end{figure}
}

To extend the analysis to the full data and not only excesses, a direction which is promising is to consider special sub-structures of phase-type distributions. In those cases, the EM algorithm becomes simpler and potentially much faster, consequently enabling the analysis of larger models with more phases being fitted to larger data.

\section{Conclusion}\label{sec:conclusion}
We have presented a claim severities regression model based on PH distributions, incorporating covariates through the initial probability vector, which can be cast into a mixture-of-experts framework. When combined with an inhomogeneity transform, these regression models span distributions with different tail behaviors and possible multimodality. Furthermore, they are flexible and may converge to fairly general regression model specifications. We have derived an effective estimation procedure based on the EM algorithm and a weighted multinomial regression problem and shown its feasibility on synthetic and real insurance data.

Several questions remain open for further research, such as automatic feature selection procedures or using other machine learning methods to predict the initial probability vector. In addition, the analysis of more than one risk, together with their respective claim frequencies, all together in a global model with the same underlying Markov structure is an interesting research direction.

\textbf{Acknowledgement.} MB would like to acknowledge financial support from the Swiss National Science Foundation Project 200021\_191984. JY would like to acknowledge financial support from the Swiss National Science Foundation Project  IZHRZ0\_180549.

\textbf{Declaration} MB and JY declare no conflict of interest related to the current manuscript.

\newpage
\appendix
\section{Estimated coefficients for the PH-MoE models}\label{apA}

{\color{black}

\begin{longtable}{lcc} 
\hline
 State:VariableLevel&  Pareto PH-MoE & Semi-composite PH-MoE \\\hline

2:(Intercept)              & $0.06 \;   (0.51)$          & $0.08 \;   (0.52)$          \\
2:Powere                   & $-0.58 \;   (0.51)$         & $-0.59 \;   (0.51)$         \\
2:Powerf                   & $-33.49$                    & $-25.88$                    \\
2:Powerg                   & $-1.00 \;   (0.52)$         & $-1.03 \;   (0.52)^{*}$     \\
2:Powerh                   & $0.59 \;   (0.46)$          & $0.57 \;   (0.46)$          \\
2:Poweri                   & $-0.24 \;   (0.65)$         & $-0.37 \;   (0.65)$         \\
2:Powerj                   & $40.39 \;   (4.09)^{***}$   & $45.45 \;   (7.05)^{***}$   \\
2:Powerk                   & $0.27 \;   (0.51)$          & $0.23 \;   (0.52)$          \\
2:Powerl                   & $0.95 \;   (0.58)$          & $0.85 \;   (0.59)$          \\
2:Powerm                   & $-17.44 \;   (0.00)^{***}$  & $-17.87$                    \\
2:Powern                   & $-47.86$                    & $-52.91 \;   (0.00)^{***}$  \\
2:Powero                   & $2.01 \;   (1.41)$          & $0.93 \;   (0.90)$          \\
2:RegionBasse-Normandie    & $-11.76 \; (152.10)$        & $-16.73 \;   (0.00)^{***}$  \\
2:RegionBretagne           & $0.21 \;   (0.25)$          & $0.33 \;   (0.25)$          \\
2:RegionCentre             & $-0.76 \;   (0.29)^{**}$    & $-0.55 \;   (0.28)$         \\
2:RegionHaute-Normandie    & $-46.08$                    & $-49.11$                    \\
2:RegionIle-de-France      & $-1.46 \;   (0.39)^{***}$   & $-1.47 \;   (0.39)^{***}$   \\
2:RegionLimousin           & $9.97 \; (243.82)$          & $10.79 \;   (0.00)^{***}$   \\
2:RegionNord-Pas-de-Calais & $13.50 \; (120.65)$         & $24.08 \;  (10.18)^{*}$     \\
2:RegionPays-de-la-Loire   & $-0.22 \;   (0.29)$         & $-0.14 \;   (0.29)$         \\
2:RegionPoitou-Charentes   & $-0.33 \;   (0.34)$         & $-0.22 \;   (0.34)$         \\
3:(Intercept)              & $-94.94 \;  (40.27)^{*}$    & $-96.95 \;   (1.45)^{***}$  \\
3:Powere                   & $38.62 \; (226.78)$         & $62.64 \;   (1.02)^{***}$   \\
3:Powerf                   & $7.73 \;   (4.73)$          & $27.96 \;   (2.15)^{***}$   \\
3:Powerg                   & $57.42 \;  (74.76)$         & $69.91 \;   (0.55)^{***}$   \\
3:Powerh                   & $-2.44 \;  (11.39)$         & $-15.44 \;   (0.00)^{***}$  \\
3:Poweri                   & $78.55 \;  (58.65)$         & $75.11 \;   (0.81)^{***}$   \\
3:Powerj                   & $48.72 \;   (5.22)^{***}$   & $73.89 \;   (4.96)^{***}$   \\
3:Powerk                   & $-71.15 \;   (0.01)^{***}$  & $-98.00 \;   (0.00)^{***}$  \\
3:Powerl                   & $-102.41 \;   (0.98)^{***}$ & $-146.20 \;   (0.00)^{***}$ \\
3:Powerm                   & $156.56 \;   (0.02)^{***}$  & $169.79 \;   (0.15)^{***}$  \\
3:Powern                   & $-7.52 \;   (0.00)^{***}$   & $-7.06 \;   (0.00)^{***}$   \\
3:Powero                   & $9.92 \;   (5.00)^{*}$      & $26.76 \;   (2.24)^{***}$   \\
3:RegionBasse-Normandie    & $11.43 \;   (0.01)^{***}$   & $-12.22 \;   (0.01)^{***}$  \\
3:RegionBretagne           & $-9.48 \;   (0.00)^{***}$   & $-23.21 \;   (0.14)^{***}$  \\
3:RegionCentre             & $39.76 \;  (38.48)$         & $28.88 \;   (1.79)^{***}$   \\
3:RegionHaute-Normandie    & $191.25 \;  (18.38)^{***}$  & $220.06 \;   (0.00)^{***}$  \\
3:RegionIle-de-France      & $88.00 \;  (40.35)^{*}$     & $69.88 \;   (0.82)^{***}$   \\
3:RegionLimousin           & $48.99 \;   (0.00)^{***}$   & $47.03 \;   (0.15)^{***}$   \\
3:RegionNord-Pas-de-Calais & $67.78 \;  (40.04)$         & $61.61 \;   (6.40)^{***}$   \\
3:RegionPays-de-la-Loire   & $17.61 \;  (21.66)$         & $23.22 \;   (1.86)^{***}$   \\
3:RegionPoitou-Charentes   & $37.12 \;  (38.48)$         & $26.96 \;   (1.81)^{***}$   \\
4:(Intercept)              & $0.35 \;   (9.79)$          & $9.79 \;  (36.37)$          \\
4:Powere                   & $-77.73 \;  (18.24)^{***}$  & $-101.22 \;   (0.00)^{***}$ \\
4:Powerf                   & $-33.56 \;   (7.75)^{***}$  & $-34.54 \;  (27.48)$        \\
4:Powerg                   & $-60.72 \;  (41.44)$        & $-74.51 \;  (13.04)^{***}$  \\
4:Powerh                   & $-40.08 \;   (7.75)^{***}$  & $-69.93 \;  (13.04)^{***}$  \\
4:Poweri                   & $-39.41 \;   (7.75)^{***}$  & $-69.09 \;  (13.04)^{***}$  \\
4:Powerj                   & $0.25 \;   (4.04)$          & $-24.40 \;   (6.00)^{***}$  \\
4:Powerk                   & $-106.39 \;   (0.00)^{***}$ & $-115.65 \;   (0.00)^{***}$ \\
4:Powerl                   & $-107.74 \;  (20.20)^{***}$ & $-129.44 \; (227.37)$       \\
4:Powerm                   & $-7.35 \;   (0.02)^{***}$   & $-34.53 \;   (0.15)^{***}$  \\
4:Powern                   & $-79.21 \;  (55.22)$        & $-87.78 \;   (2.60)^{***}$  \\
4:Powero                   & $-117.79$                   & $-114.34 \;   (0.00)^{***}$ \\
4:RegionBasse-Normandie    & $99.85 \;  (23.37)^{***}$   & $107.02 \;  (83.71)$        \\
4:RegionBretagne           & $37.09 \;   (6.50)^{***}$   & $57.67 \;  (25.35)^{*}$     \\
4:RegionCentre             & $60.95 \;  (35.89)$         & $65.00 \;  (25.35)^{*}$     \\
4:RegionHaute-Normandie    & $115.60 \;  (45.65)^{*}$    & $132.22 \; (141.61)$        \\
4:RegionIle-de-France      & $-25.40 \;   (0.00)^{***}$  & $-38.18 \;   (0.00)^{***}$  \\
4:RegionLimousin           & $95.60 \;   (3.29)^{***}$   & $100.51 \;  (16.77)^{***}$  \\
4:RegionNord-Pas-de-Calais & $87.37 \;  (26.09)^{***}$   & $96.12 \;  (20.10)^{***}$   \\
4:RegionPays-de-la-Loire   & $38.32 \;   (6.51)^{***}$   & $58.70 \;  (25.35)^{*}$     \\
4:RegionPoitou-Charentes   & $-0.62 \;   (0.00)^{***}$   & $-1.92 \;   (0.00)^{***}$   \\
5:(Intercept)              & $35.11 \;   (7.75)^{***}$   & $36.13 \;  (27.47)$         \\
5:Powere                   & $-62.31 \;  (41.44)$        & $-75.62 \;  (13.04)^{***}$  \\
5:Powerf                   & $-33.55 \;   (7.75)^{***}$  & $-34.53 \;  (27.48)$        \\
5:Powerg                   & $-62.99 \;  (41.44)$        & $-76.76 \;  (13.04)^{***}$  \\
5:Powerh                   & $-39.53 \;   (7.74)^{***}$  & $-69.38 \;  (13.04)^{***}$  \\
5:Poweri                   & $-40.73 \;   (7.75)^{***}$  & $-70.45 \;  (13.04)^{***}$  \\
5:Powerj                   & $0.34 \;   (4.03)$          & $-24.30 \;   (6.00)^{***}$  \\
5:Powerk                   & $-85.51 \; (149.07)$        & $-102.62 \;   (0.28)^{***}$ \\
5:Powerl                   & $-142.25 \;   (0.00)^{***}$ & $-162.24 \;   (0.08)^{***}$ \\
5:Powerm                   & $-86.81 \;   (0.00)^{***}$  & $-89.54 \;   (0.00)^{***}$  \\
5:Powern                   & $-120.92 \;   (0.00)^{***}$ & $-137.69 \;   (0.00)^{***}$ \\
5:Powero                   & $-121.06 \;   (0.00)^{***}$ & $-120.60 \;   (0.00)^{***}$ \\
5:RegionBasse-Normandie    & $68.24 \;  (19.46)^{***}$   & $83.73 \;  (83.75)$         \\
5:RegionBretagne           & $4.72 \;   (1.65)^{**}$     & $33.51 \;  (18.47)$         \\
5:RegionCentre             & $28.03 \;  (38.04)$         & $40.35 \;  (18.47)^{*}$     \\
5:RegionHaute-Normandie    & $95.90 \;  (22.90)^{***}$   & $119.57 \; (137.82)$        \\
5:RegionIle-de-France      & $-36.69 \;   (7.75)^{***}$  & $-37.44 \;  (27.48)$        \\
5:RegionLimousin           & $64.53 \;   (3.29)^{***}$   & $77.67 \;  (16.79)^{***}$   \\
5:RegionNord-Pas-de-Calais & $54.28 \;  (26.22)^{*}$     & $71.24 \;  (16.20)^{***}$   \\
5:RegionPays-de-la-Loire   & $5.64 \;   (1.68)^{***}$    & $34.32 \;  (18.47)$         \\
5:RegionPoitou-Charentes   & $-0.87 \;   (0.56)$         & $-0.73 \;   (0.58)$         \\\hline
\caption{Significance code: $^{***}p<0.001$; $^{**}p<0.01$; $^{*}p<0.05$. The coefficients associated with state $1$ can be deduced from the constraint $\sum_{k=1}^5 \pi_k(\bfX)=1$.} 
\label{tab:myfirstlongtable}
\end{longtable}

}

{ \color{black}
\section{Inference for phase-type regression models}\label{sec:inference}
Inference and goodness of fit can always be done via parametric bootstrap methods. However, re-fitting a PH regression can be too costly. A first approach is the following general-purpose result:

\begin{theorem}\label{asympt_norm}
Let $\lambda$, $\vect{\eta}:=(\bfalp, \vect{\theta}, \bfT)$ be such that the log-density
$$y\mapsto \log\left[\bfp(\bfalp)\exp\left(\int_0^y\lambda(s;\vect{\theta})ds \bfT\right)\bft\,\lambda(y;\vect{\theta})\right],\quad y>0,$$ satisfies 
Assumptions A0)-A3) of Section 6.3 of \cite{lehmann2006theory} (common supports, identifiable parameters, i.i.d.\ observations, and true parameters in the interior of the parameter space) 
and Assumptions A)-D) of Section 6.5 of \cite{lehmann2006theory} (existence and finite expectation of third derivatives of log-density, strict positive-definiteness of information matrix, and the representation of the latter in terms of expected double partial derivatives of the log-density).

 Then, as the sample size $n\to \infty$, we have that
 \begin{enumerate}
 \item There exist consistent solutions $\hat{\vect{\eta}}_n$ to the likelihood equations.
  \item The following convergence holds:
  $$\sqrt{n}\left(\hat{\vect{\eta}}_n-\vect{\eta}\right)\stackrel{d}{\rightarrow}\mathcal{N}(\vect{0},\mat{\mathcal{I}}^{-1}),$$ where $\mat{\mathcal{I}}$ is the information matrix.
    \item The $j$-th parameter is asymptotically efficient:
  $$\sqrt{n}\left(\hat{\eta}_{jn}-\eta_j\right)\stackrel{d}{\rightarrow}\mathcal{N}(0,[\mat{\mathcal{I}}^{-1}]_{jj}).$$
 \end{enumerate}
\end{theorem}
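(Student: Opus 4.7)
My approach is to invoke the classical Cram\'er-type maximum likelihood theory directly: the hypotheses of the theorem are precisely the standing regularity conditions of that theory, so the three conclusions follow off the shelf from \cite{lehmann2006theory}. First I would cite the existence-of-a-consistent-root result of Section~6.3 under Assumptions A0)--A3) to obtain claim (1). Then, adjoining Assumptions A)--D) of Section~6.5, I would run the standard Taylor expansion of the score function about the true $\vect{\eta}$, combining the central limit theorem for the score with the law of large numbers for the empirical information matrix, to deduce claim (2). Claim (3) is then read off as the marginal of the multivariate limit in (2).

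What needs verification to make this citation legal in the PH-MoE setting is that the log-density
\[
\ell(y;\vect{\eta})=\log\lambda(y;\vect{\theta})+\log\!\left[\bfp(\bfalp)\exp\!\left(\int_0^y\lambda(s;\vect{\theta})\,ds\;\bfT\right)\bft\right]
\]
is smooth enough in $\vect{\eta}=(\bfalp,\vect{\theta},\bfT)$. I would argue this componentwise: the softmax map $\bfalp\mapsto\bfp(\bfalp)$ and the matrix exponential $\bfT\mapsto\exp(c\bfT)$ are real-analytic in their arguments on open sets, and smoothness in $\vect{\theta}$ is a standing assumption on the chosen inhomogeneity family. Hence $\ell$ is $C^{\infty}$ in $\vect{\eta}$, and its partial derivatives of all orders can be written as explicit matrix functionals via the Cauchy representation recalled in Section~\ref{sec:PH-MoE}. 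The integrability of the third derivatives required by Assumption A) of Section~6.5 should then follow from the IPH tail asymptotics \eqref{eq:IPHasymptotic}, which guarantee finite expectations of analytic functionals of $Y$ under the true parameter.

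The main obstacle, and the reason the theorem is phrased conditionally, is identifiability (Assumption A1)). Phase-type representations are famously non-unique: distinct pairs $(\bfp,\bfT)$ can yield the same distribution, and the softmax itself is invariant under a common shift $\bfalp_k\mapsto\bfalp_k+\vect{c}$ across all $k$. I would handle this by restricting to a canonical subclass before invoking the theorem --- for instance, minimal irreducible PH representations with a fixed state ordering, together with a reference-category constraint such as $\bfalp_p=\vect{0}$. Within such a subclass identifiability holds, and positive definiteness of the information matrix (Assumption B)) follows from the non-trivial dependence of $\ell$ on every remaining free parameter. Beyond that reduction, which the hypotheses subsume into A1), the proof is essentially bookkeeping on top of the cited classical machinery.
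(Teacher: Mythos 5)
Your proposal is correct and takes essentially the same route as the paper: the theorem is stated conditionally on the Lehmann--Casella regularity assumptions, and the paper's proof is a one-line citation of Theorem 5.1 in Section 6.5 of that reference. Your additional remarks on smoothness, integrability, and the identifiability obstruction are sound and in fact echo the paper's own discussion immediately following the theorem, where the authors concede that identifiability and positive-definiteness of the information matrix are difficult or near-impossible to verify for general PH representations.
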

\begin{proof}
The proof translates directly from Theorem $5.1$ in Section $6.5$ of \cite{lehmann2006theory}.
\end{proof}

Theorem \ref{asympt_norm} is somewhat academic in nature for general PH distributions, because although most of the properties are easy to verify for most parameters and transforms, others are difficult, such as the moment conditions, or near-impossible, such as the identifiability and strict positive-definiteness of the information matrix. Indeed, the latter conditions require that all eigenvalues of $\bfT$ be distinct (although this is not sufficient) and that all parameters be away from the border regions, which is an uncommon scenario when fitting real data.

However, there is still something to be said when it comes to the regression coefficients. Once the EM algorithm has converged, we may consider the parameters $(\vect{\theta},\bfT)$ as nuisance parameters and thus perform inference on the partial likelihood
$\ell(\bfalp|\bfX ,\vect{\theta},\bfT)$, cf. \cite{cox1975partial}, see also \cite{wong1986theory}. As the latter reference suggests, the sub-optimal use of information incurs a loss of efficiency (standard errors should be considered only as lower bounds) which should be weighted against the possible gains in robustness and simplicity of analysis. The usual experiments where the partial likelihood is useful is when the nuisance parameters take values in high-dimensional spaces, making the calculation of the matrix $\mat{\mathcal{I}}$ difficult and not robust. Since the number of nuisance parameters for a PH-MoE model is at least $p^2+1$, it is a clear candidate for benefitting from these tradeoffs.

Another advantage of performing inference on the regression variables alone is that we circumvent making allusion to the possibly non-identifiable parameters of the sub-intensity matrix $\bfT$. In practice, this means that we perform inference during the multinomial regression step (R-step) of Algorithm \ref{alg:IPHMoE}, and use the output to draw conclusions on the statistical significance of the covariates $\bfX$, as well as to perform variable selection.
}

\bibliography{experts.bib}

\begin{thebibliography}{}

\bibitem[Albrecher and Bladt, 2019]{albrecher2019inhomogeneous}
Albrecher, H. and Bladt, M. (2019).
\newblock Inhomogeneous phase-type distributions and heavy tails.
\newblock {\em Journal of Applied Probability}, 56(4):1044--1064.

\bibitem[Albrecher et~al., 2021a]{bladt2021semi}
Albrecher, H., Bladt, M., Bladt, M., and Yslas, J. (2021a).
\newblock Mortality modeling and regression with matrix distributions.
\newblock {\em arXiv:2011.03219}.

\bibitem[Albrecher et~al., 2021b]{penalised}
Albrecher, H., Bladt, M., and Muller, L.~J. (2021b).
\newblock Penalised likelihood methods for phase-type dimension selection.
\newblock {\em Preprint}.

\bibitem[Albrecher et~al., 2022]{albrecher2020iphfit}
Albrecher, H., Bladt, M., and Yslas, J. (2022).
\newblock Fitting inhomogeneous phase-type distributions to data: The
  univariate and the multivariate case.
\newblock {\em Scandinavian Journal of Statistics}, 49(1):44--77.

\bibitem[Asmussen, 2008]{asmussen2008applied}
Asmussen, S. (2008).
\newblock {\em Applied Probability and Queues}, volume~51.
\newblock Springer Science \& Business Media.

\bibitem[Asmussen et~al., 1996]{asmussen1996fitting}
Asmussen, S., Nerman, O., and Olsson, M. (1996).
\newblock Fitting phase-type distributions via the {EM} algorithm.
\newblock {\em Scandinavian {J}ournal of {S}tatistics}, 23(4):419--441.

\bibitem[Bladt, 2021]{pricing2021}
Bladt, M. (2021).
\newblock Phase-type distributions for claim severity regression modeling.
\newblock {\em ASTIN Bulletin: The Journal of the IAA}, pages 1--32.

\bibitem[Bladt and Nielsen, 2017]{Bladt2017}
Bladt, M. and Nielsen, B.~F. (2017).
\newblock {\em Matrix-Exponential Distributions in Applied Probability}.
\newblock Springer.

\bibitem[Cox, 1975]{cox1975partial}
Cox, D.~R. (1975).
\newblock Partial likelihood.
\newblock {\em Biometrika}, 62(2):269--276.

\bibitem[Embrechts et~al., 2013]{embrechts2013modelling}
Embrechts, P., Kl{\"u}ppelberg, C., and Mikosch, T. (2013).
\newblock {\em Modelling Extremal Events: For Insurance and Finance},
  volume~33.
\newblock Springer Science \& Business Media.

\bibitem[Fung et~al., 2019]{fung2019class}
Fung, T.~C., Badescu, A.~L., and Lin, X.~S. (2019).
\newblock A class of mixture of experts models for general insurance:
  Theoretical developments.
\newblock {\em Insurance: Mathematics and Economics}, 89:111--127.

\bibitem[Fung et~al., 2021a]{fung2020new}
Fung, T.~C., Badescu, A.~L., and Lin, X.~S. (2021a).
\newblock A new class of severity regression models with an application to
  {IBNR} prediction.
\newblock {\em North American Actuarial Journal}, 25(2):206--231.

\bibitem[Fung et~al., 2021b]{fung2021mixture}
Fung, T.~C., Tzougas, G., and Wuthrich, M. (2021b).
\newblock Mixture composite regression models with multi-type feature
  selection.
\newblock {\em arXiv:2103.07200}.

\bibitem[Gr{\"u}n and Miljkovic, 2019]{grun2019extending}
Gr{\"u}n, B. and Miljkovic, T. (2019).
\newblock Extending composite loss models using a general framework of advanced
  computational tools.
\newblock {\em Scandinavian Actuarial Journal}, 2019(8):642--660.

\bibitem[Hill, 1975]{hill1975simple}
Hill, B.~M. (1975).
\newblock A simple general approach to inference about the tail of a
  distribution.
\newblock {\em The Annals of Statistics}, 3(5):1163--1174.

\bibitem[Lee and Lin, 2010]{lee2010modeling}
Lee, S.~C. and Lin, X.~S. (2010).
\newblock Modeling and evaluating insurance losses via mixtures of {E}rlang
  distributions.
\newblock {\em North American Actuarial Journal}, 14(1):107--130.

\bibitem[Lehmann and Casella, 2006]{lehmann2006theory}
Lehmann, E.~L. and Casella, G. (2006).
\newblock {\em Theory of Point Estimation}.
\newblock Springer Science \& Business Media.

\bibitem[Miljkovic and Gr{\"u}n, 2016]{miljkovic2016modeling}
Miljkovic, T. and Gr{\"u}n, B. (2016).
\newblock Modeling loss data using mixtures of distributions.
\newblock {\em Insurance: Mathematics and Economics}, 70:387--396.

\bibitem[Neuts, 1975]{neuts75}
Neuts, M.~F. (1975).
\newblock {Probability distributions of phase type.}
\newblock In {\em Liber Amicorum Professor Emeritus H. Florin}, pages 173--206.
  Department of Mathematics, University of Louvian, Belgium.

\bibitem[Neuts, 1981]{neuts1981matrix}
Neuts, M.~F. (1981).
\newblock {Matrix-geometric solutions in stochastic models, volume 2 of Johns
  Hopkins Series in the Mathematical Sciences}.

\bibitem[Reynkens et~al., 2017]{reynkens2017modelling}
Reynkens, T., Verbelen, R., Beirlant, J., and Antonio, K. (2017).
\newblock Modelling censored losses using splicing: A global fit strategy with
  mixed {E}rlang and extreme value distributions.
\newblock {\em Insurance: Mathematics and Economics}, 77:65--77.

\bibitem[Tseung et~al., 2020]{tseung2020lrmoe}
Tseung, S.~C., Badescu, A., Fung, T.~C., and Lin, X.~S. (2020).
\newblock {LRMoE}: an {R} package for flexible actuarial loss modelling using
  mixture of experts regression model.
\newblock {\em Available at SSRN 3740215}.

\bibitem[Tzougas et~al., 2014]{tzougas2014optimal}
Tzougas, G., Vrontos, S., and Frangos, N. (2014).
\newblock Optimal bonus-malus systems using finite mixture models.
\newblock {\em ASTIN Bulletin: The Journal of the IAA}, 44(2):417--444.

\bibitem[Wong, 1986]{wong1986theory}
Wong, W.~H. (1986).
\newblock Theory of partial likelihood.
\newblock {\em The Annals of Statistics}, 14(1):88--123.

\bibitem[Yuksel et~al., 2012]{yuksel2012twenty}
Yuksel, S.~E., Wilson, J.~N., and Gader, P.~D. (2012).
\newblock Twenty years of mixture of experts.
\newblock {\em IEEE Transactions on Neural Networks and Learning Systems},
  23(8):1177--1193.

\end{thebibliography}

\end{document}